\documentclass{scrartcl}
\usepackage[english]{babel}

\usepackage{amsmath,amssymb,amsthm,amsfonts}
\usepackage{exscale}
\usepackage{mathtools}

\usepackage[T1]{fontenc}
\usepackage{palatino}
\usepackage{microtype}

\usepackage{bbm,mathrsfs} 

\usepackage{graphicx,xcolor}
\usepackage{textcomp}

\usepackage{shuffle}

\usepackage[numbers,sort&compress]{natbib}
\bibliographystyle{JHEPsortdoi}

\definecolor{links}{rgb}{0,0.3,0}
\usepackage[colorlinks=true,citecolor=blue,urlcolor=links,breaklinks=true]{hyperref}

\newcommand{\Filename}[1]{{\upshape\ttfamily #1}}

\numberwithin{equation}{section}

\newcommand{\Q}{
\mathbbm{Q}%
}
\newcommand{\N}{
\mathbbm{N}%
}
\newcommand{\Z}{
\mathbbm{Z}%
}
\newcommand{\C}{
\mathbbm{C}%
}

\newcommand{\tp}{\otimes}

\newcommand{\conjugate}[1]{#1^{\ast}}

\newcommand{\dd}[1][]{\mathrm{d}^{#1}}
\newcommand{\restrict}[2]{%
{\left. #1 \right|}_{#2}%
}
\DeclareMathOperator{\lin}{lin}

\newcommand{\defas}{
\mathrel{\mathop:}=
}
\newcommand{\set}[1]{
\left\{ #1 \right\}
}
\newcommand{\setexp}[2]{
\left\{ #1\!:\ #2 \right\}
}
\newcommand{\abs}[1]{
\left\lvert #1 \right\rvert
}

\newcommand{\ber}[2]{\mathcal{B}_{#1}(#2)}
\newcommand{\vect}[1]{\boldsymbol{#1}}
\newcommand{\PLi}{\operatorname{PLi}}
\newcommand{\MPLs}{\mathscr{L}}
\newcommand{\Inversion}{\Phi}
\newcommand{\letter}[1]{\omega_{#1}}
\DeclareMathOperator{\Li}{Li}
\newcommand{\mzv}[2][]{\zeta^{#1}({#2})}
\newcommand{\MZV}{\mathcal{Z}}
\newcommand{\RU}[1]{\mu_{#1}}


\newcommand{\MapleNote}{%
	\footnote{Maple is a trademark of Waterloo Maple Inc.}
}
\newcommand{\MapleTM}{%
	\href{http://www.maplesoft.com/products/Maple/}
	{\textsf{\textup{Maple}}\texttrademark}%
}

\newcommand{\AnaReg}[2]{\Reglim_{#1 \rightarrow #2}}
\DeclareMathOperator*{\Reglim}{Reg}

\newcommand{\imag}{i}
\DeclareMathOperator{\Imaginaerteil}{Im}
\DeclareMathOperator{\Realteil}{Re}

\newtheorem{theorem}{Theorem}[section]
\newtheorem{lemma}[theorem]{Lemma}
\newtheorem{corollary}[theorem]{Corollary}
\newtheorem{conjecture}[theorem]{Conjecture}
\newtheorem{example}[theorem]{Example}
\theoremstyle{definition}
\newtheorem{definition}[theorem]{Definition}
\newtheorem{remark}[theorem]{Remark}

\newcommand{\mytitle}{The parity theorem for multiple polylogarithms}

\hypersetup{%
	pdftitle = {\mytitle},
	pdfauthor = {Erik Panzer}
}

\begin{document}

\title{\vspace{-1cm}\mytitle}

\author{\href{mailto:erik.panzer@all-souls.ox.ac.uk}{Erik Panzer}\footnote{%
All Souls College, OX1 4AL, Oxford, UK%
}}
\maketitle
\abstract{%
	We generalize the well-known parity theorem for multiple zeta values (MZV) to functional equations of multiple polylogarithms (MPL).
	This reproves the parity theorem for MZV with an additional integrality statement, and also provides parity theorems for special values of MPL at roots of unity (also known as coloured MZV).
	We give explicit formulas in depths 2 and 3 and provide a computer program to compute the functional equations.%
}

\section{Introduction}
\label{sec:intro}

Multiple zeta values (MZV) are defined for integers $\vect{n}\in\N^d$ with $n_d>1$ as
\begin{equation}
	\mzv{\vect{n}}
	=
	\mzv{n_1,\ldots,n_d}
	\defas 
	\sum_{0<k_1<\cdots<k_d} \frac{1}{k_1^{n_1}\!\cdots k_d^{n_d}}
	\label{eq:def-MZV}%
\end{equation}
where $d$ is called \emph{depth} and $\abs{\vect{n}}=n_1+\cdots+ n_d$ is the \emph{weight} \cite{Hoffman:MultipleHarmonicSeries,Zagier:MZVApplications}. 
We set $\N^0 \defas \set{\emptyset}$ and $\mzv{\emptyset} \defas 1$ in weight $\abs{\emptyset} \defas 0$ and we write
\begin{equation}\!
	\MZV^{d}_w
	\defas \lin_{\Q} \setexp{\mzv[k]{2} \mzv{\vect{n}} }{
		\vect{n}\in\N^r,
		k\in\N_0,
		n_r>1,
		\abs{\vect{n}}+2k=w,
		r \leq d
	}
	\label{eq:MZV-depth-filtration}%
\end{equation}
for all rational linear combinations of MZV with weight $w$ and depth at most $d$.
In our convention all powers of $\mzv{2}$ have depth zero, hence $\MZV^0_{2k}=\Q\mzv[k]{2}$ and $\MZV^0_{2k+1}=\set{0}$.%
\footnote{%
	This definition is natural to our approach via polylogarithms (powers of $\pi$ can be generated from $\log(z)$, which has depth zero). It also simplifies theorem~\ref{thm:parity} in abolishing the need to state it modulo products.
}
There are plenty of relations between MZV.
The following well-known result, conjectured in \cite{BorweinGirgensohn:Triple}, has been proven analytically \cite{Tsumura:CombinatorialEulerZagier}, via double-shuffle relations \cite{IharaKanekoZagier:DerivationDoubleShuffle,Brown:DepthGraded,Machide:PairOfIndexSets} and from associator relations \cite{Jarossay:DepthReductions}.
\begin{theorem}[Parity for MZV]
\label{thm:parity}%
Whenever the weight $w$ and depth $d$ are of opposite parity, then $\MZV_w^d = \MZV_w^{d-1}$. 
In other words, $\mzv{n_1,\ldots,n_d}$ is a $\Q[\mzv{2}]$-linear combination of MZV of depth at most $d-1$, provided that $\abs{\vect{n}}+d$ is odd.
\end{theorem}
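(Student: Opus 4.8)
The plan is to deduce the statement from a functional equation for the multiple polylogarithms $\MPLs$ under the inversion $z\mapsto 1/z$, evaluated at the fixed point $z=1$ of this map, so that MZV are related to MZV. First I would realise each $\mzv{\vect n}$ as a shuffle-regularised value of an iterated integral, i.e.\ of $\MPLs$ at argument~$1$, using the two one-forms $\letter{0}=\dd t/t$ and $\letter{1}=\dd t/(1-t)$. Under this dictionary the weight of $\mzv{\vect n}$ is the length of the integration word and the depth is the number of its non-zero letters, so that $\MZV^{d}_w$ is spanned by (regularised) words of weight $w$ with at most $d$ letters equal to $\letter{1}$.

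The heart of the argument is the inversion functional equation furnished by the operator $\Inversion$. Substituting $t\mapsto 1/t$ in the defining one-forms gives $\letter{0}\mapsto-\letter{0}$ and $\letter{1}\mapsto\letter{0}+\letter{1}$, and integrating term by term expresses $\MPLs(1/z)$ through $\MPLs(z)$. I would then track the effect on the weight-$w$, depth-$d$ part: after collecting the transformed one-forms and reorganising the iterated integral, the contribution that keeps all $d$ non-zero letters in place reproduces the original with the sign $(-1)^{w-d}=(-1)^{w+d}$ coming from the $w-d$ factors $\letter{0}\mapsto-\letter{0}$, while every remaining contribution either converts a $\letter{1}$ into $\letter{0}$ — strictly lowering the depth — or is absorbed into the boundary terms generated at the singular points $0,1,\infty$. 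These boundary terms are governed by the Bernoulli polynomials $\ber{n}{\,\cdot\,}$ familiar from the classical inversion formula $\Li_n(z)+(-1)^n\Li_n(1/z)=-(2\pi\imag)^n\,\ber{n}{\log z/(2\pi\imag)}/n!$, and after regularisation they contribute only rational multiples of powers of $\mzv{2}$ times MZV of depth at most $d-1$, which by the definition of $\MZV^{d-1}_w$ already lie in $\MZV^{d-1}_w$.

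Evaluating the functional equation at the fixed point $z=1$ then yields, for any weight-$w$ depth-$d$ index $\vect n$, a relation of the shape
\begin{equation*}
	\mzv{\vect n} = (-1)^{w+d}\,\mzv{\vect n} + R,
	\qquad R \in \MZV^{d-1}_w .
\end{equation*}
When $w+d$ is odd the sign is $-1$, so $2\,\mzv{\vect n}=R$ and, since $2$ is invertible over $\Q$, we obtain $\mzv{\vect n}\in\MZV^{d-1}_w$; ranging over all such indices gives $\MZV^{d}_w=\MZV^{d-1}_w$. When $w+d$ is even the sign is $+1$ and the relation carries no information at leading depth, exactly as it must. The integrality refinement follows because the only denominator introduced in the whole reduction is this single division by $2$: keeping all coefficients in $\Z[1/2]$ throughout yields the sharper statement.

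I expect the main obstacle to be the regularisation bookkeeping. The inversion $z\mapsto 1/z$ produces extra $\letter{0}$ letters and drags the integration endpoint through the singular points $0,1,\infty$, so the naive manipulation of divergent iterated integrals must be made rigorous with tangential base points, and one must prove that no boundary term secretly feeds back a weight-$w$, depth-$d$ piece carrying the wrong sign. Verifying that all such contributions genuinely land in $\MZV^{d-1}_w$, and that the surviving leading sign is \emph{exactly} $(-1)^{w+d}$ after regularisation, is the delicate part; the algebraic skeleton above is then immediate.
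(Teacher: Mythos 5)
Your skeleton --- pull back the iterated-integral representation along $t\mapsto 1/t$, so that $\letter{0}\mapsto -\letter{0}$ while each depth-carrying letter maps to itself plus a depth-lowering $\letter{0}$, then compose the inverted path $\infty\to 1$ as $\infty\to 0\to 1$ --- is viable, and it is precisely the alternative sketched in the remark following theorem~\ref{thm:generalized-parity-MPLs}. But the decisive step is missing, and it is misstated where you do address it. You claim the boundary terms $\Reglim\int_{\infty}^{0} u$ produced by path composition ``contribute only rational multiples of powers of $\mzv{2}$ times MZV of depth at most $d-1$''. The first half of this is false: genuine MZV of positive depth do occur in the regularized values at infinity. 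For instance $\AnaReg{z}{\infty}\Li_{1,2}(1,z)=\mzv{3}$, as one reads off from the $z_1\to1$ limit of \eqref{eq:Li_1,2(z1,z2)} --- not a power of $\mzv{2}$. (The statement that \emph{only} powers of $2\pi\imag$ survive holds for the iterated regularized limit in \emph{all} variables, as in the footnote to remark~\ref{rem:order-of-limits}; in your one-variable MZV setting only one limit is taken.) What actually saves the argument is the weaker but strict claim that a boundary word $u$ containing $k$ depth-carrying letters contributes MZV of depth at most $k-1$, one unit of depth being converted into a Bernoulli factor $\ber{k_0}{\cdot}$. Without this strict drop, the deconcatenation $uv$ of the \emph{leading} word with $u$ nonempty feeds a total-depth-$d$ term into your remainder $R$, and the relation $2\mzv{\vect{n}}=R$ no longer places $\mzv{\vect{n}}$ in $\MZV^{d-1}_w$. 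This depth drop is exactly the content of the paper's lemma~\ref{lem:reglim}, proved there via the shuffle identity \eqref{eq:shuffle-reg}, and it is not routine bookkeeping; your proposal explicitly defers it (``the delicate part''), so as written the proof has a genuine gap at its crux.

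For comparison: the paper deliberately avoids the composition-of-paths route, because the MZV contamination of the boundary terms is hard to control there; instead it recurses on the first argument via \eqref{eq:diff-PLi} and \eqref{eq:diff-PLi-one}, integrates with the explicit integer-coefficient primitives of lemma~\ref{lemma:primitives}, and fixes integration constants from the $z_1\to 0$ asymptotics of lemma~\ref{lem:reglim}, obtaining the MZV-free functional equation of theorem~\ref{thm:generalized-parity-MPLs}; theorem~\ref{thm:parity} then follows by the regularized limit $\vect{z}\to(1,\ldots,1)$ (lemma~\ref{lem:z_d->1}). Even if you complete your boundary analysis you would recover theorem~\ref{thm:parity}, but not this stronger functional equation. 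Finally, your integrality aside is unsupported: shuffle regularization and path composition introduce factorials $1/k!$ and Bernoulli denominators, so ``only one division by $2$'' is not evident; note that the paper's refinement \eqref{eq:parity-MZV-integer} is over $\Z$ --- the Bernoulli denominators being absorbed into the generators $\mzv{2k}\mzv{\vect{m}}$ via \eqref{eq:even-mzv} --- and rests on the integer coefficients in lemma~\ref{lemma:primitives}, which your route does not obviously reproduce.
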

This theorem implies $\mzv{2k} \in \Q \mzv[k]{2}$ in depth one; $\mzv{1,2} \in \Q\mzv{3}$ and $\mzv{2,3} \in \Q \mzv{5} + \Q\mzv{2}\mzv{3}$ are examples in depth two and an explicit witness for a reduction from depth $3$ is (taken from \cite{BluemleinBroadhurstVermaseren:Datamine})
\begin{equation}\label{eq:mzv(1,5,2)}
	\mzv{1, 5, 2}
	= \tfrac{703}{875}\mzv[4]{2}-\tfrac{17}{2}\mzv{3}\mzv{5}-\tfrac{7}{10}\mzv{3,5}+2\mzv{2}\mzv[2]{3}.
\end{equation}
Note that there are two products on MZV, known as shuffle and quasi-shuffle (also called stuffle), which express a product $\mzv{\vect{n}} \mzv{\vect{m}}$ as a linear combination of MZV with integer coefficients \cite{IharaKanekoZagier:DerivationDoubleShuffle}.
For example, $\mzv{a} \mzv{b} = \mzv{a,b}+\mzv{b,a}+\mzv{a+b}$ shows that the right-hand side of \eqref{eq:mzv(1,5,2)} is indeed in $\MZV^2_8$.
In general, the products ensure that $\MZV^d_w \cdot \MZV^{d'}_{w'} \subseteq \MZV_{w+w'}^{d+d'}$.

Thinking of MZV as special values $\mzv{\vect{n}}=\Li_{\vect{n}}(1,\ldots,1)$ of multiple polylogarithms (MPL), defined by the series \cite{Goncharov:PolylogsArithmeticGeometry}
\begin{equation}
	\Li_{\vect{n}}(\vect{z})
	=
	\Li_{n_1,\ldots,n_d}(z_1,\ldots,z_d)
	\defas
	\sum_{0<k_1<\cdots<k_d} \frac{z_1^{k_1}\!\cdots z_d^{k_d}}{k_1^{n_1}\!\cdots k_d^{n_d}},
	\label{eq:def-MPL} %
\end{equation}
raises the question if theorem~\ref{thm:parity} also applies for other values of $\vect{z}$. The case when all $z_i \in \RU{N} \defas \setexp{z \in \C}{z^N=1}$ are $N$-th roots of unity has been of particular interest \cite{Goncharov:MplCyclotomyModularComplexes,Zhao:StandardRelations}, partly because such numbers occur in particle physics \cite{Broadhurst:Aufbau,Broadhurst:IrreducibleEulerSums,Broadhurst:SixthRoots}.
We set $\Li_{\emptyset} \defas 1$ in weight zero and write
\begin{equation}\label{eq:def-MZV(N)}
	\MZV^d_w(\mu_N) 
	\defas \lin_{\Q}\setexp{
		(2\pi\imag)^k \Li_{\vect{n}}(\vect{z})
	}{
		\vect{n}\in\N^r,
		\vect{z}\in\RU{N}^r, \abs{\vect{n}}+k=w, r\leq d
	}
\end{equation}
where, in contrast to \eqref{eq:MZV-depth-filtration}, $n_r = 1$ is allowed as long as $z_r\neq 1$ (this ensures convergence) and $k$ is restricted to even values in the cases $N=1,2$.

Indeed, David Broadhurst conjectured the parity theorem for alternating sums (that means $N=2$, so $z_i=\pm 1$) in \cite{Broadhurst:IrreducibleEulerSums}, but explicit proofs were only given for depth two \cite{BorweinBorweinGirgensohn:ExplicitEuler} and in low weights \cite{BluemleinBroadhurstVermaseren:Datamine}.\footnote{Jianqiang Zhao informed me of preliminary work which renders it very likely that the generating series approach of \cite{IharaKanekoZagier:DerivationDoubleShuffle} can be extended to prove the parity theorem for alternating sums.}
We like to point out though that in the meantime this conjecture has been established: It follows immediately from \cite[Th\'{e}or\`{e}me~7.2]{Deligne:GroupeFondamentalMotiviqueN}, a very deep result that completely describes the algebra of motivic alternating sums. For a simpler approach via the octagon equation, see \cite[corollary~4.2.4]{Glanois:Thesis}.

He also studied intensively the case $N=6$ in \cite{Broadhurst:SixthRoots,Broadhurst:Aufbau}, where the complexity of the arguments requires a refinement in the statement of the parity theorem: 
\begin{conjecture}[Generalized parity {\cite[section~6.2.4]{Broadhurst:SixthRoots}}]%
\label{conj:generalized-parity}%
	Let $z_1,\ldots,z_d$ be sixth roots of unity. If $\abs{\vect{n}}+d$ is odd, then the real part $\Realteil \Li_{\vect{n}}(\vect{z})$ is of depth at most $d-1$. For even $\abs{\vect{n}}+d$, the same holds for the imaginary part $\Imaginaerteil \Li_{\vect{n}}(\vect{z})$.\footnote{Again, the original formulation is slightly different (stated modulo products) but equivalent, which is easy to see via a recursive argument that shows that one of the factors must actually be $2\pi\imag$.}
\end{conjecture}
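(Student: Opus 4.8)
The plan is to deduce the whole statement from one \emph{inversion relation} for multiple polylogarithms, specialised to roots of unity by complex conjugation. I would first pass to Goncharov's iterated-integral representation, writing $\Li_{\vect n}(\vect z)$ as an iterated integral from $0$ to $1$ in the one-forms $\letter 0=\dd t/t$ and $\letter a=\dd t/(t-a)$, whose $d$ nonzero letters $a$ are built from the partial products $z_j z_{j+1}\cdots z_d$. Since $\mu_N$ is a group, these letters and all arguments produced below stay in $\mu_N$, so every polylogarithm encountered lies in some $\MZV^{d'}_{\abs{\vect n}}(\mu_N)$; nothing leaves the sixth roots of unity when $N=6$.

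The engine is the substitution $t\mapsto 1/t$, under which $\letter 0\mapsto-\letter 0$ and $\letter a\mapsto\letter{1/a}-\letter 0$ for $a\neq 0$. Collecting the single term that retains all $d$ marked letters yields a depth-$d$ contribution in the inverted arguments $\vect z^{-1}$; the $\abs{\vect n}-d$ inverted zero-letters already supply the parity $(-1)^{\abs{\vect n}-d}=(-1)^{\abs{\vect n}+d}$, while every other term has absorbed a marked letter into an $\letter 0$ and so has depth at most $d-1$. The substitution also sends the range $[0,1]$ to $[1,\infty]$ with reversed orientation, so I re-express everything over $[0,1]$ via the path-reversal identity and tangential (shuffle) regularisation at the singular endpoints; at a root of unity the boundary terms thereby produced are powers of $2\pi\imag$ times lower-depth polylogarithms, hence elements of $\MZV^{d-1}_{\abs{\vect n}}(\mu_N)$. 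The upshot is the inversion relation
\begin{equation*}
	\Li_{\vect n}(\vect z)-(-1)^{\abs{\vect n}+d}\,\Li_{\vect n}(\vect z^{-1})\in\MZV^{d-1}_{\abs{\vect n}}(\mu_N),\qquad\vect z\in\mu_N^d,
\end{equation*}
whose depth-one case is the classical inversion formula, giving $\Li_n(z)+(-1)^n\Li_n(1/z)\in\Q\,(2\pi\imag)^n$ for $z\in\mu_N$.

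Now specialise. For a root of unity $z_i^{-1}=\overline{z_i}$, and since the defining series of $\Li_{\vect n}$ has rational coefficients, $\Li_{\vect n}(\vect z^{-1})=\overline{\Li_{\vect n}(\vect z)}$. Thus $\Li_{\vect n}(\vect z)-(-1)^{\abs{\vect n}+d}\,\overline{\Li_{\vect n}(\vect z)}$ has depth at most $d-1$. When $\abs{\vect n}+d$ is odd this equals $2\,\Realteil\Li_{\vect n}(\vect z)$ and when it is even it equals $2\imag\,\Imaginaerteil\Li_{\vect n}(\vect z)$, which is exactly conjecture~\ref{conj:generalized-parity}. The factor $2\pi\imag$ carried by every correction term is precisely why the products in Broadhurst's ``modulo products'' formulation always contain a genuine depth-lowering factor, as the footnote observes. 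As the argument used only that $\mu_N$ is a group closed under inversion, it equally proves parity for arbitrary $\mu_N$ and, at $N=1$, reproves theorem~\ref{thm:parity} with its integrality refinement.

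The main obstacle is making this inversion relation rigorous in arbitrary depth. The substitution $t\mapsto 1/t$ is legitimate only away from $0,1,\infty$, so the divergent configurations --- leading or trailing $\letter 0$'s and letters equal to $1$ --- must be regularised in a way that provably places the boundary contributions in $\MZV^{d-1}$ with the right power of $2\pi\imag$, leaving nothing of full depth there. Just as delicate is the sign and index bookkeeping: the orientation reversal and the path-reversal identity each introduce signs and reverse the order of the letters, and these must be shown to combine with the $(-1)^{\abs{\vect n}-d}$ above so that the surviving depth-$d$ term is $\Li_{\vect n}(\vect z^{-1})$ with the \emph{same} index order, not the reversed one. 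Organising the whole computation through shuffle-regularised generating functions (or the Hopf-algebra coproduct) is the most reliable way I see to certify both the net sign $(-1)^{\abs{\vect n}+d}$ and the claim that the depth-$d$ part of the relation is exactly $\Li_{\vect n}(\vect z^{-1})$; doing this uniformly in depth is where essentially all the work lies.
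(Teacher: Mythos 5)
Your route --- pull back the iterated-integral representation \eqref{eq:MPL-as-Hlog} under $t\mapsto 1/t$, so that $\letter{0}\mapsto-\letter{0}$ and $\letter{a}\mapsto\letter{1/a}-\letter{0}$, keep the unique full-depth word to recover $(-1)^{\abs{\vect{n}}+d}\Li_{\vect{n}}(1/\vect{z})$, and push all orientation-reversal and path-concatenation boundary terms into $\MZV^{d-1}_{\abs{\vect{n}}}(\RU{N})$ --- is not the paper's proof but precisely the alternative that the paper itself sketches in the remark following theorem~\ref{thm:generalized-parity-MPLs} and then deliberately declines to use. The paper instead proves the stronger functional equation $\PLi_{\vect{n}}(\vect{z})\in\MPLs^{d-1}_{\abs{\vect{n}}}(\vect{z})$ by induction on the weight: differentiate in the \emph{first} variable (key formula \eqref{eq:diff-PLi-one}, where the signs conspire so that the $1/(1-z_1)$ part regroups into lower-depth $\PLi$'s), integrate back inside the explicit $\Z$-module of definition~\ref{def:MPLs} (lemma~\ref{lemma:primitives}), fix the constant of integration from the $z_1\rightarrow 0$ asymptotics (lemma~\ref{lem:reglim}), and only afterwards pass to roots of unity via lemmas~\ref{lem:convergent-limit} and \ref{lem:z_d->1}. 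Your reduction of the real/imaginary-part statement to the inversion relation via $z_i^{-1}=\conjugate{z_i}$ coincides with the paper's own rephrasing \eqref{eq:parity-Nth-roots}, and your top-depth sign bookkeeping ($(-1)^{\abs{\vect{n}}-d}$ from the inverted $\letter{0}$'s) is correct.

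The genuine gap is the one you yourself flag, and it is not a side issue but the crux. Path reversal and concatenation produce products of a regularized value attached to the tangential basepoint at infinity with a depth-$r'$ integral over $[0,1]$; for the depth count to close you must show that the regularized value of a depth-$r$ word at infinity itself \emph{drops} depth (e.g.\ is a rational multiple of a power of $2\pi\imag$), since otherwise a depth-$r$ constant times a depth-$(d-r)$ polylogarithm has total depth $d$ and the claimed relation says nothing. This depth-dropping at infinity is exactly the content of the paper's lemma~\ref{lem:reglim} (see the footnote to remark~\ref{rem:order-of-limits}), proven there via the shuffle identity \eqref{eq:shuffle-reg}; you assert it (``powers of $2\pi\imag$ times lower-depth polylogarithms'') without proof and concede that ``essentially all the work'' lies in this regularization together with the sign and word-order bookkeeping. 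As submitted, your attempt is therefore a credible plan whose central lemma is missing, not a proof. A smaller overreach: your closing claim that the same argument reproves theorem~\ref{thm:parity} \emph{with its integrality refinement} does not follow --- shuffle regularization introduces denominators (the $1/k!$ in lemma~\ref{lem:z_d->1}), whereas the paper obtains the integer coefficients \eqref{eq:parity-MZV-integer} from the $\Z$-module structure of definition~\ref{def:MPLs} built on Bernoulli polynomials, which your inversion-pullback route does not provide.
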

Since for roots of unity, $z_i^{-1} = \conjugate{z_i}$ is the complex conjugate, we can combine the two cases and rephrase this conjecture as
\begin{equation}
	\Li_{\vect{n}}(\vect{z}) - (-1)^{\abs{\vect{n}}_0} \Li_{\vect{n}}(1/\vect{z})
	\in \MZV^{d-1}_{\abs{\vect{n}}}(\RU{N})
	\label{eq:parity-Nth-roots}%
\end{equation}
in the case $N=6$. Here we abbreviate $\abs{\vect{n}}_0 \defas \abs{\vect{n}}-d = (n_1 + \cdots + n_d) - d$ for indices $\vect{n} \in \N^d$ and we write
\begin{equation}
	1/\vect{z}
	\defas \left( \frac{1}{z_1},\ldots,\frac{1}{z_d} \right)
	\quad\text{for vectors $\vect{z} \in \C^d$ with $z_1,\ldots,z_d \neq 0$}.
	\label{eq:def-z-inv}%
\end{equation}
Our goal is to present a simple and constructive proof of the general parity theorem~\eqref{eq:parity-Nth-roots}, valid for arbitrary values of $N$.%
\footnote{%
	While the structure of the algebra $\MZV(\RU{N})$ varies strongly with $N$ and is understood only in a few special cases \cite{Deligne:GroupeFondamentalMotiviqueN,Glanois:Descents}, the parity theorem turns out to be insensitive to these differences and applies always.%
}
Furthermore, we realized that \eqref{eq:parity-Nth-roots} is not just true at roots of unity, but in fact holds as a functional equation of multiple polylogarithms and remains valid for arbitrary values of the arguments $\vect{z}$. Concretely, in section~\ref{sec:proof} we will prove
\begin{theorem}[parity for MPL]
	\label{thm:generalized-parity}%
	For all $d\in\N$ and all indices $\vect{n} \in \N^d$, the function%
	\begin{equation}
		\PLi_{\vect{n}}(\vect{z})
		\defas \Li_{\vect{n}}(\vect{z}) - (-1)^{\abs{\vect{n}}_0} \Li_{\vect{n}}(1/\vect{z})
		\label{eq:def-PLi}%
	\end{equation}
	of the variables $\vect{z}=(z_1,\ldots,z_d)$ is of depth at most $d-1$, meaning that it can be written as a $\Q$-linear combination of the functions
	\begin{equation}
	(2\pi\imag)^{k_0} \prod_{i=1}^d \log^{k_i}(-z_i\cdots z_d)
	\prod_{i=1}^s \Li_{\vect{n}^{(i)}}(\vect{z}^{(i)})
		\label{eq:thm-generalized-parity}%
	\end{equation}
	where the indices $\vect{n}^{(i)} \in \N^{d_i}$ have total depth $d_1+\cdots+d_s < d$ and preserve the weight
	$ \abs{\vect{k}} + \sum_{i=1}^s \abs{\vect{n}^{(i)}} = \abs{\vect{n}}$.
	Each of the arguments $\vect{z}^{(i)}_{j}$ is a consecutive product $z_{\mu}z_{\mu+1}\!\cdots z_{\nu}$ for some $\mu \leq \nu$.
\end{theorem}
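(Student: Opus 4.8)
The plan is to work with the iterated-integral representation of multiple
polylogarithms rather than the defining series~\eqref{eq:def-MPL}. Recall that
$\Li_{\vect{n}}(\vect{z})$ can be written as an iterated integral over a word in
the one-forms $\mathrm{d}t/t$ and $\mathrm{d}t/(t-a)$, where the singularities
$a$ are built from the consecutive products $z_{\mu}\cdots z_{\nu}$. In this
language the substitution $\vect{z} \mapsto 1/\vect{z}$ that appears in
\eqref{eq:def-PLi} should correspond to an explicit transformation of the
integration path and of the letters in the word. First I would fix this
dictionary precisely: translate $\Li_{\vect{n}}$ into its word, identify the
differential form encoding $\Li_{\vect{n}}(1/\vect{z})$, and reduce the problem
to a statement about iterated integrals $\int_0^1 \omega_{a_1}\cdots
\omega_{a_w}$ under the reflection $t \mapsto 1/t$ (or $t\mapsto$ some Möbius
image that fixes the endpoints up to the boundary terms captured by the
$\log(-z_i\cdots z_d)$ and $2\pi\imag$ factors).

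The heart of the argument is then the \emph{path-reversal / reflection} identity
for iterated integrals. Under the change of variables that inverts the
arguments, an iterated integral of weight $w$ equals $(-1)^{w}$ times the
integral of the \emph{reversed} word, up to products of lower-weight integrals
and explicit logarithmic boundary contributions (this is the shuffle-regularized
version of the reversal-of-path formula). The sign $(-1)^{w}$ is exactly what
produces the factor $(-1)^{\abs{\vect{n}}_0}$ once the weight $w=\abs{\vect{n}}$
and the depth~$d$ are accounted for, since each of the $d$ integration steps
that carries a $\mathrm{d}t/t$ contributes differently from the
$\mathrm{d}t/(t-a)$ steps. The key point is that reversing the word maps the
family of MPL back into itself but, crucially, \emph{the maximal-depth word is
its own reverse up to sign}, so in the combination $\PLi_{\vect{n}} =
\Li_{\vect{n}} - (-1)^{\abs{\vect{n}}_0}\Li_{\vect{n}}(1/\vect{z})$ the
top-depth, top-weight term cancels identically. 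Everything that survives is a
product of strictly lower-depth polylogarithms times powers of $\log(-z_i\cdots
z_d)$ and $2\pi\imag$, which is precisely the shape asserted in
\eqref{eq:thm-generalized-parity}.

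Concretely I would proceed by induction on the depth~$d$. I would split the
reflection identity into (i) the reversed top word, which recombines with
$\Li_{\vect{n}}(\vect{z})$ to give zero by the sign bookkeeping, and (ii) the
regularization/boundary terms, each of which factorizes as a lower-depth MPL
times logarithms; to these lower-depth pieces I would apply the inductive
hypothesis where needed, or simply observe that they already have depth
$<d$. The consecutive-product structure of the arguments
$\vect{z}^{(i)}_{j}=z_{\mu}\cdots z_{\nu}$ falls out automatically because the
letters $a$ in the iterated-integral representation are exactly such products,
and the reflection $t\mapsto 1/t$ permutes these letters among themselves
without introducing new singularities.

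The main obstacle I anticipate is \emph{regularization and the precise tracking
of boundary terms}. The naive reflection of the integration contour passes
through the endpoints $0,1,\infty$ where the integrals have logarithmic
divergences, so the clean identity $\int \omega_{a_1}\cdots\omega_{a_w} =
(-1)^w \int \omega_{a_w}\cdots\omega_{a_1}$ only holds after a careful
shuffle-regularization, and it is exactly the regularized correction terms that
generate the explicit $\log^{k_i}(-z_i\cdots z_d)$ and $(2\pi\imag)^{k_0}$
factors in~\eqref{eq:thm-generalized-parity}. Getting the signs, the powers of
$2\pi\imag$ (which encode the choice of branch / monodromy around the
singularities), and the combinatorics of which $\log$ appears with which
argument all correct will be the delicate part; I expect this to require the
explicit regularization operator $\Reg$ and a careful statement of how $t\mapsto
1/t$ acts on the generating word, rather than any conceptually new input.
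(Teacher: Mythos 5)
Your route---pulling back the iterated-integral word under $\Inversion(\vect{z})=1/\vect{z}$ and comparing paths---is genuinely different from the paper's actual proof, and it is in fact precisely the alternative sketched (and deliberately set aside) in the remark following theorem~\ref{thm:generalized-parity-MPLs}. The paper instead argues differentially, by recursion over the \emph{weight}: it differentiates in the first variable $z_1$, where the key formula \eqref{eq:diff-PLi-one} groups the $1/(1-z_1)$ terms into lower-depth combinations $\PLi_{\vect{n}'}(\vect{z}')$ and $\PLi_{\vect{n}'}(\vect{z}'')$, integrates back using the explicit primitives of lemma~\ref{lemma:primitives} inside the module $\MPLs^{D}_{w}(\vect{z})$, and fixes the constant of integration from the $z_1\rightarrow 0$ asymptotics (lemma~\ref{lem:reglim}). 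Your sign bookkeeping for the top-depth cancellation is right in spirit, although the mechanism is cleaner than you state: since $\Inversion^{\ast}\dd\log(z_i)=-\dd\log(z_i)$ while $\Inversion^{\ast}\dd\log(1-z_{i,j})=\dd\log(1-z_{i,j})-\sum_{i\leq k\leq j}\dd\log(z_k)$, the pullback reproduces the \emph{same} top-depth word with sign $(-1)^{\abs{\vect{n}}-d}=(-1)^{\abs{\vect{n}}_0}$, lower-depth terms aside; word reversal enters only if you additionally reverse the path, and conflating the two risks sign errors.

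The genuine gap is in your treatment of the boundary terms, which you flag as a technicality but which is actually the crux. Comparing $\Li_{\vect{n}}(1/\vect{z})=\int_{\vect{\infty}}^{\vect{z}}\Inversion^{\ast}(w)$ with $\int_{\vect{0}}^{\vect{z}}\Inversion^{\ast}(w)$ via path concatenation produces lower-depth MPL multiplied by regularized iterated integrals along the connecting path, i.e.\ by \emph{arbitrary} (regularized) MZV such as $\mzv{3}$---not merely powers of $2\pi\imag$ and logarithms as you assert. Since the statement \eqref{eq:thm-generalized-parity} allows only $\Q$-linear combinations with constants $(2\pi\imag)^{k_0}$ and arguments that are consecutive products of the $z_i$ (a constant argument $1$ is not of this form), such MZV cannot be absorbed, and your argument as it stands proves only the weaker reduction with MZV coefficients---exactly the contamination the paper's remark warns about. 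Showing that these constants in fact conspire into powers of $2\pi\imag$ is the content of lemma~\ref{lem:reglim}, which computes the regularized expansion one variable at a time \emph{starting with $z_1$}, where no MZV occur; moreover the ordering is essential, since the analogous limit in $z_d$ is more complicated (remark~\ref{rem:order-of-limits}). So ``apply the regularization operator carefully'' is not a routine finishing step: you would need to prove an analogue of \eqref{eq:reglim} for your setup, and as a byproduct your route would also forfeit the integrality refinement of theorem~\ref{thm:generalized-parity-MPLs}, which the differential construction delivers for free.
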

These functions are multivalued and we will always assume that $\vect{z}$ is in
\begin{equation}
	\C^d
	\setminus \bigcup_{1 \leq i \leq j \leq d} \setexp{\vect{z}}{z_i z_{i+1} \cdots z_j \in [0,\infty)}
	.
	\label{eq:domain}%
\end{equation}
This simply connected domain avoides the branch cuts $z_i\cdots z_d \in [0,\infty)$ of the logarithms%
\footnote{%
	With $\log(z)$ we denote the principal branch with a cut along $(-\infty,0]$, so $\log(-z)$ is analytic on $\C\setminus[0,\infty)$.
	We avoid equations like $\log(-z_1 z_2) = \log(z_1) + \log(-z_2)$ because they are not valid everywhere.
}
in \eqref{eq:def-PLi} and makes $\Li_{\vect{n}}(\vect{z})$ well-defined via analytic continuation along a straight path starting near $(0,\ldots,0)\in \C^d$ with \eqref{eq:def-MPL}, see \cite{Zhao:MPLcontinuation}.

A simple example of these functional equations in depth two is $\vect{n}=(1,2)$:
\begin{equation}\begin{split}
\label{eq:Li_1,2(z1,z2)}%
	\PLi_{1,2}(z_1,z_2)
	&=
	\Li_{1,2}\left(\tfrac{1}{z_1},\tfrac{1}{z_2}\right)
	+ \Li_{1,2}(z_1,z_2)
	\\
	&=
	\Li_3(z_1)+2\Li_3(z_2)-\Li_3(z_1 z_2)
	+2\mzv{2}\log(-z_2)
	\\
	&-\log(-z_1 z_2)\left[
		\Li_2(z_2)+\Li_2(z_1)+\mzv{2}
		+\tfrac{1}{2} \log^2(-z_2)
	\right]
	\\
	&+\tfrac{1}{2}\Li_1(z_1)\left[
		\log^2(-z_1 z_2)
		-\log^2(-z_2)
	\right]
	+\tfrac{1}{3}\log^3(-z_2)
	.
\end{split}\end{equation}
We will show how these functional equations can be computed explicitly for arbitrary indices $\vect{n}$. To illustrate this, we derive closed expressions (for arbitrary weights) in depths two and three, see \eqref{eq:PLi-depth-2} and \eqref{eq:PLi-depth-3}.

A striking feature of theorem~\ref{thm:generalized-parity} is the absence of any transcendental numbers in the functional equation, except for powers of $2\pi\imag$ in \eqref{eq:thm-generalized-parity}. In the limit when all $z_i$ approach roots of unity, we therefore obtain (see lemma~\ref{lem:z_d->1})
\begin{corollary}
	\label{cor:generalized-parity}%
	The generalized parity \eqref{eq:parity-Nth-roots} holds for arbitrary $N\in\N$.
\end{corollary}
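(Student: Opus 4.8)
The plan is to derive corollary~\ref{cor:generalized-parity} from theorem~\ref{thm:generalized-parity} by specializing the functional equation to arguments $\vect{z}\in\RU{N}^d$. For $N$-th roots of unity we have $1/z_i=\conjugate{z_i}$, so $\PLi_{\vect{n}}(\vect{z})$ from \eqref{eq:def-PLi} becomes exactly the left-hand side $\Li_{\vect{n}}(\vect{z})-(-1)^{\abs{\vect{n}}_0}\Li_{\vect{n}}(1/\vect{z})$ of \eqref{eq:parity-Nth-roots}. It therefore suffices to verify that, after specialization, every term of the shape \eqref{eq:thm-generalized-parity} lies in $\MZV_{\abs{\vect{n}}}^{d-1}(\RU{N})$.

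First I would check that the building blocks of \eqref{eq:thm-generalized-parity} degenerate to admissible objects. Each argument $\vect{z}^{(i)}_{j}$ is a consecutive product $z_\mu\cdots z_\nu$ and hence again an $N$-th root of unity, so every factor $\Li_{\vect{n}^{(i)}}(\vect{z}^{(i)})$ is a coloured MPL of depth $d_i$; since $d_1+\cdots+d_s<d$, their product carries depth strictly below $d$. The logarithms are harmless too: for a root of unity $z_i\cdots z_d=e^{2\pi\imag a/N}$ the modulus is one, so $\log(-z_i\cdots z_d)$ is purely imaginary and equals $2\pi\imag\,q_i$ for some $q_i\in\Q$; hence $\log^{k_i}(-z_i\cdots z_d)$ is a rational multiple of $(2\pi\imag)^{k_i}$ (this holds even on the cut, where $\log(-1)=\imag\pi=2\pi\imag\cdot\tfrac12$). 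Collecting these with the explicit factor $(2\pi\imag)^{k_0}$, each monomial prefactor becomes a rational multiple of $(2\pi\imag)^{\abs{\vect{k}}}$, and the weight condition $\abs{\vect{k}}+\sum_i\abs{\vect{n}^{(i)}}=\abs{\vect{n}}$ guarantees that the term has total weight $\abs{\vect{n}}$ and depth $<d$, i.e.\ lies in $\MZV_{\abs{\vect{n}}}^{d-1}(\RU{N})$.

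The remaining and most delicate point is that roots of unity sit on the boundary of the domain \eqref{eq:domain}: a consecutive product of unit-modulus numbers meets $[0,\infty)$ precisely when it equals $1$. If no consecutive product of the $z_i$ equals $1$, the tuple is interior and the functional equation holds by direct evaluation; otherwise the specialization is a genuine limit taken from inside the domain. Two boundary effects occur there: a trailing index $n_d=1$ together with $z_d\to1$ makes $\Li_{\vect{n}}$ logarithmically divergent, and $\log(-z_i\cdots z_d)$ approaches the finite but branch-sensitive value $\pm\imag\pi$ as $z_i\cdots z_d\to1$ on the cut. I would handle both by approaching the roots of unity one variable at a time, invoking the shuffle-regularized limit $\AnaReg{z_d}{1}$ provided by lemma~\ref{lem:z_d->1}: it supplies regularized limits that exist within $\MZV(\RU{N})$ and controls the divergent pieces, after which one relabels and iterates over $z_{d-1},\ldots,z_1$.

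The hard part is exactly this boundary analysis: one must confirm that the divergent contributions—the trailing-$1$ polylogarithms together with the cut-crossing logarithms—assemble into finite regularized limits on both sides simultaneously, so that the identity of theorem~\ref{thm:generalized-parity} persists through the limit. Granting lemma~\ref{lem:z_d->1}, the surviving terms are all of the form analysed above and hence lie in $\MZV_{\abs{\vect{n}}}^{d-1}(\RU{N})$, while the left-hand side tends to $\Li_{\vect{n}}(\vect{z})-(-1)^{\abs{\vect{n}}_0}\Li_{\vect{n}}(\conjugate{\vect{z}})$, which is the claim \eqref{eq:parity-Nth-roots}.
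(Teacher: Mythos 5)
Your strategy coincides with the paper's: specialize the functional equation (the paper uses it in the sharper form of theorem~\ref{thm:generalized-parity-MPLs}), note that consecutive products of $N$-th roots of unity lie again in $\RU{N}$ and that the logarithmic prefactors degenerate to rational multiples of powers of $2\pi\imag$, and reach boundary points by limits from inside the domain \eqref{eq:domain}. However, what you explicitly label ``the hard part'' --- that the divergent contributions ``assemble into finite regularized limits on both sides simultaneously'' --- is precisely the step the paper supplies and you leave open, and lemma~\ref{lem:z_d->1} alone does not grant it: that lemma only expands a \emph{single} factor $\Li_{\vect{m}}(\vect{y})$ in powers of $\log(1-y_s)$. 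The missing argument is in fact soft. The left-hand side $\PLi_{\vect{n}}(\vect{y})$ has a finite limit because the constraint $(n_d,z_d)\neq(1,1)$ is built into the definition \eqref{eq:def-MZV(N)} (lemma~\ref{lem:convergent-limit}); expanding each divergent factor via theorem~\ref{thm:anareg} turns the whole right-hand side into a polynomial in $\log(1-y_s)$ with coefficients analytic at $y_s=1$, and the uniqueness in theorem~\ref{thm:anareg} together with finiteness of the limit forces every coefficient of a positive power of the logarithm to vanish there. Consequently one may simply replace each divergent factor by its regularized value $x_0$ from lemma~\ref{lem:z_d->1}, and the grading $x_k \in \MZV^{s-k}_{\abs{\vect{m}}-k}(\RU{N})$ guarantees the total depth stays below $d$. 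No term-by-term verification of cancellations is needed, so the gap closes with tools you already had on the table.

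A second, smaller slip is your order of limits: you send $z_d$ to its root of unity first and iterate down to $z_1$, whereas the paper takes the limits in the order \eqref{eq:PLi-limit}, with $y_1 \rightarrow z_1$ innermost. This order is not cosmetic. Because the arguments in each generator of definition~\ref{def:MPLs} are disjoint, totally ordered consecutive products, specializing $y_1, y_2, \ldots$ in increasing order specializes the arguments of every factor from left to right, so that whenever the \emph{last} argument of some factor tends to $1$, all its earlier arguments are already fixed roots of unity --- exactly the hypothesis $y_1,\ldots,y_{s-1}\in\RU{N}$ of lemma~\ref{lem:z_d->1}. With your order the last argument of a factor can degenerate while earlier arguments are still variables, and the lemma does not apply as stated. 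Your remaining observations --- interiority of $\vect{z}$ when no consecutive product equals $1$, and the harmlessness of the $\pm\imag\pi$ branch ambiguity since both values are rational multiples of $2\pi\imag$ (the paper's $\ber{k}{z_{i,d}}$ even removes the ambiguity for $k\neq1$, while the residual ambiguity of $\ber{1}{z_{i,d}}$ cancels in the convergent sum) --- are correct and agree with the paper.
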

In our example, the limit $z_1 \rightarrow 1$ in \eqref{eq:Li_1,2(z1,z2)} is smooth%
\footnote{%
	As the left-hand side of \eqref{eq:Li_1,2(z1,z2)} is analytic at $z_1=1$ (the only singularities are $z_i=0$, $z_2=1$ and $z_1 z_2=1$), the non-analytic contributions from $\Li_3(z_1)$, $\Li_2(z_1)$ and $\Li_1(z_1)$ in the right-hand side cancel each other.
}
and gives
\begin{equation*}
	\Li_{1,2}\left(1,\tfrac{1}{z_2}\right)
	+ \Li_{1,2}(1,z_2)
	=
	\Li_3(z_2) + \mzv{3}-\log(-z_2)\Li_2(z_2)-\tfrac{1}{6}\log^3(-z_2)
	.
\end{equation*}
Subsequent limits like $z_2\rightarrow 1,-1,\imag$ yield explicit depth-reductions such as
\begin{align*}
	2 \mzv{1,2} &= 2\mzv{3}, \\
	2 \Li_{1,2}(1,-1) &= \mzv{3} + \Li_3(-1) = \tfrac{1}{4} \mzv{3} \quad\text{and} \\
	2 \Realteil \Li_{1,2}(1,\imag) &= \mzv{3} + \Li_3(\imag) - \tfrac{1}{48} \imag\pi^3 +\tfrac{1}{2}\imag\pi \Li_2(\imag) = \tfrac{29}{32} \mzv{3} - \tfrac{1}{2} \pi \Imaginaerteil \Li_2(\imag),
\end{align*}
where we simplified the result with $\Realteil \Li_3(\imag) = - \frac{3}{32} \mzv{3}$ \cite{Lewin:PolylogarithmsAssociatedFunctions}.%
\footnote{%
	Note that in the last case, the imaginary parts $\Imaginaerteil \Li_3(\imag)$ and $\frac{\pi}{2}\Realteil \Li_2(\imag)$ must cancel with $-\frac{\pi^3}{48}$, which is indeed easily verified with \eqref{eq:polylog-inversion}.%
}%
As a consequence of our closed formulas for $\PLi_{\vect{n}}(\vect{z})$ in small depths, we also obtain explicit formulas for special cases like MZV. In depth two such a formula is well-known \cite{BorweinBorweinGirgensohn:ExplicitEuler,Zagier:232,HuardWilliamsZhang:Tornheim}, see \eqref{eq:MZV-depth2}, but in depth three and even weight $w=n_1+n_2+n_3$, the reduction
\begin{equation}\begin{split}\label{eq:MZV-depth-3}
	\mzv{n_1,n_2,n_3}
	&=-\frac{1}{2}\Big(
		\mzv{n_1+n_2,n_3}
		+\mzv{n_1,n_2+n_3}
		+\mzv{w}
	\Big)
	\\&
	+(-1)^{n_1} \sum_{\mathclap{\substack{\mu+\nu+2s=w\\\mu\geq n_2,\nu \geq n_3}}}
		\mzv{2s}
		\binom{\mu-1}{n_2-1}\binom{\nu-1}{n_3-1}
		\mzv{\mu,\nu}
	\\&
	+(-1)^{n_2} \sum_{\mathclap{\substack{\mu+\nu+2s=w\\ \mu\geq n_3, \nu > n_1}}}
		\mzv{2s}
		\binom{\mu-1}{n_3-1}\binom{\nu-1}{n_1-1}
		\mzv{\mu}\mzv{\nu}(-1)^{\mu}
	\\&
	+(-1)^{n_3} \sum_{\mathclap{\substack{\mu+\nu+2s=w\\\mu\geq n_2,\nu>n_1}}}
		\mzv{2s}
		\binom{\mu-1}{n_2-1}\binom{\nu-1}{n_1-1}
		\mzv{\mu,\nu} 
	\\&
	-(-1)^{n_3} \sum_{\mathclap{\substack{n_1+\nu+2s=w\\\nu>n_2}}}
		\mzv{2s}
		\binom{\nu-1}{n_2-1}
		\Big(\mzv{n_1,\nu}+\mzv{n_1+\nu}\Big)
\end{split}\end{equation}
for arbitrary $\vect{n}\in\N^3$, $n_3>1$ appears to be new. Note that a related formula, valid for $n_1, n_2 > 1$, was given in \cite[Theorem~5.2]{KomoriMatsumotoTsumura:MZVfromRootSystems}. Analogous depth reductions of polylogarithms at arbitrary roots of unity can be obtained from \eqref{eq:PLi-depth-2} and \eqref{eq:PLi-depth-3} as demonstrated in section~\ref{sec:depth-2}.
Furthermore, such explicit formulas can in principle be derived for arbitrary depth by continuation of the recursive algorithm that we will describe in section~\ref{sec:proof}.

Notice that except for the occurrence of $\mzv{0}=-1/2$, all coefficients in \eqref{eq:MZV-depth-3} are integers. Indeed, we will actually prove a stronger version of theorem~\ref{thm:generalized-parity} with integer coefficients (theorem~\ref{thm:generalized-parity-MPLs}), by replacing the logarithms with Bernoulli polynomials. In the case of MZV, this strengthens theorem~\ref{thm:parity} to the statement that ($k=0$ is allowed here)
\begin{equation}\label{eq:parity-MZV-integer}
	\mzv{n_1,\ldots,n_d}
	\in 
	\sum_{\substack{\abs{\vect{m}}+2k=\abs{\vect{n}}\\\vect{m}\in\N^r,\ r<d}} 
		\Z \mzv{2k}\mzv{\vect{m}}
\end{equation}
whenever $\abs{\vect{n}}_0=\abs{\vect{n}}-d$ is odd. This is not obvious in the example \eqref{eq:mzv(1,5,2)} above, but the representation we get from \eqref{eq:MZV-depth-3}  makes \eqref{eq:parity-MZV-integer} manifest:
\begin{equation}\label{eq:mzv(1,5,2)-integer}\begin{split}
	\mzv{1,5,2}
	&=
	7\mzv{8}
	+3\mzv{2}\mzv{6}
	-5\mzv{4}\mzv{2}^2
	+2\mzv{2}\mzv[2]{3}
	\\&\quad
	-3\mzv{3}\mzv{5}
	+7\mzv{1,7}+\frac{3\mzv[2]{4}+\mzv{5,3}-\mzv{6,2}}{2}
	.
\end{split}\end{equation}
An integrality statement also exists for other roots of unity (see remark~\ref{rem:Nth-roots-integer}). 

As a supplement to this article, we provide a list of the functional equations for all $\PLi_{\vect{n}}(\vect{z})$ of weight $\abs{\vect{n}} \leq 6$ in the file \Filename{feqs}. These were computed using {\MapleTM} with the script \Filename{parity.mpl} (also attached) which can be used to compute higher weight equations.{\MapleNote}

\paragraph{Acknowledgements}
My interest in the parity conjecture arose from the computation of a Feynman integral which evaluates to MPL at sixth roots of unity, where this conjecture played a crucial role \cite{Panzer:PhD,Broadhurst:Aufbau}.%
\footnote{%
	For this particular case an independent proof of the parity theorem was given in \cite[equation~(3.5.13)]{Panzer:PhD}.
}
I thank David Broadhurst, Dirk Kreimer and Oliver Schnetz for suggesting this project in the first place.
Moreover I am grateful to Oliver for the collaboration \cite{PanzerSchnetz:Phi4Coaction}, which required a better understanding of parity.
Also I thank the ESI Vienna for hospitality during the workshop \href{http://www.esi.ac.at/activities/events/2015/the-interrelation-between-mathematical-physics-number-theory-and-non-commutative-geometry}{``The interrelation between mathematical physics, number theory and noncommutative geometry''}, where exciting discussions with David sparked my interest in the general case.

Furthermore I am indebted to Francis Brown, Claire Glanois, David Jarossay and Jianqiang Zhao for helpful feedback on topics related to the parity theorem.
Takashi Nakamura kindly pointed out is work \cite{Nakamura:SimpleLerchTornheim} to me and helped to compare his results with \eqref{eq:PLi-depth-2}.

Finally I am very grateful to a referee for careful reading and many valuable suggestions which improved the clarity of this paper.

This work was carried out at the \href{http://www.ihes.fr/}{IH\'{E}S} with support from the ERC grant 257638 via the \href{http://www.cnrs.fr/}{CNRS} and at \href{https://www.asc.ox.ac.uk/}{All Souls College}.

\section{Proof of the parity theorem}
\label{sec:proof}
Our starting point is the well-known formula for depth $d=1$, 
\begin{equation}
	\PLi_n(z) =
	\Li_n(z) + (-1)^n \Li_n(1/z)
	=-\frac{(2\pi\imag)^n}{n!} B_n \left( \frac{1}{2} + \frac{\log(-z)}{2\pi\imag} \right),
	\label{eq:polylog-inversion}%
\end{equation}
which relates \emph{classical} polylogarithms of inverse arguments via the Bernoulli polynomials $B_n(x)$ that are generated by
\begin{equation}
	\frac{t e^{xt}}{e^t -1}
	= \sum_{n=0}^{\infty} B_n(x) \frac{t^n}{n!}
	.
	\label{eq:bernoulli-polynomials}%
\end{equation}
Equation~\eqref{eq:polylog-inversion} holds for all $z \in \C \setminus [0,\infty)$ and goes back to \cite{Jonquiere:Thesis}.%
\footnote{%
	Note that both sides of \eqref{eq:polylog-inversion} are analytic on this domain since we always consider the principal branches.
} 
The inductive proof given in \cite{Jonquiere:Thesis} exploits that both $(z\partial_z) \PLi_{n+1}(z) = \PLi_{n}(z)$ and $(z\partial_z) \ber{n+1}{z} = \ber{n}{z}$ fulfill the same differential equation, where
\begin{equation}
	\ber{n}{z}
	\defas 
	\frac{(2\pi\imag)^n}{n!} B_n \left( \frac{1}{2} + \frac{\log(-z)}{2\pi\imag} \right)
	= (-1)^n \ber{n}{1/z}
	\label{eq:ber}%
\end{equation}
denotes the opposite of the right-hand side of \eqref{eq:polylog-inversion}. After checking
\begin{equation*}
	\PLi_1(z) 
	= -\log(1-z)+\log(1-1/z) 
	= - \log(-z)
	= - \ber{1}{z}
\end{equation*}
for $z\in \C\setminus[0,\infty)$ to start the induction, it suffices to verify one boundary condition for each $n\geq 2$ to fix the constants of integration. The choice $z\rightarrow 1$ yields $\PLi_{2n}(z) \rightarrow 2 \mzv{2n}$ in even weight, consistent with 
\begin{equation}\label{eq:even-mzv}
	2\mzv{2n}
	= -\frac{(2\pi\imag)^{2n}}{(2n)!} B_{2n}
	= -\ber{2n}{1}
\end{equation}
for the Bernoulli numbers $B_{2n}\defas B_{2n}(0) = B_{2n}(1)$.\footnote{%
	This symmetry of Bernoulli polynomials cancels the ambiguity $\log(-z) \rightarrow \pm \imag \pi$ depending on whether $z\rightarrow 1$ is approached from the upper or lower half-planes.%
}
Finally, in odd weight, both $\PLi_{2n+1}(1)$ and $\ber{2n+1}{1}$ vanish for $n \geq 1$.

In order to extend \eqref{eq:polylog-inversion} to MPL, we use differentiation to reduce the weight, apply the theorem inductively, and integrate thereafter. This recursive approach rests on explicit differentials easily obtained from \eqref{eq:def-MPL}: For $n_1>1$,
\begin{equation}
	\partial_{z_1} \Li_{\vect{n}}(\vect{z})
	=
	\frac{1}{z_1} \Li_{\vect{n}-\vect{e}_1}(\vect{z})
	\quad\text{and}\quad
	\partial_{z_1} \Li_{\vect{n}}(1/\vect{z})
	=
	-\frac{1}{z_1} \Li_{\vect{n}-\vect{e}_1}(1/\vect{z})
	\label{eq:diff-MPL}%
\end{equation}
where $\vect{e}_1 = (1,0,\ldots,0)$ denotes the first unit vector, while for $\vect{n}=(1,\vect{n}')$ we set $\vect{z}' = (z_2,\ldots,z_d)$ and $\vect{z}'' = (z_1z_2,z_3,\ldots,z_d)$ such that
\begin{align}
	\partial_{z_1} \Li_{\vect{n}}(\vect{z})
	&= \frac{\Li_{\vect{n}'}(\vect{z}')}{1-z_1}
	-\frac{\Li_{\vect{n}'}(\vect{z}'')}{z_1(1-z_1)}
	\quad\text{and}
	\label{eq:diff-MPL-one}%
	\\
	\partial_{z_1} \Li_{\vect{n}}(1/\vect{z})
	&=\frac{\Li_{\vect{n}'}(1/\vect{z}')}{z_1(1-z_1)}
	-\frac{\Li_{\vect{n}'}(1/\vect{z}'')}{1-z_1}
	.
	\label{eq:diff-MPL-one-inverse}%
\end{align}
For example, in depth two these differentials, together with \eqref{eq:polylog-inversion}, show
\begin{equation}\label{eq:diff-PLi_1,n}
	\partial_{z_1} \PLi_{1,n}(z_1, z_2)
	= \frac{\ber{n}{z_1 z_2}-\ber{n}{z_2}}{1-z_1}
	+ \frac{(-1)^n\Li_n(1/z_2) - \Li_n(z_1 z_2)}{z_1}
\end{equation}
and we could apply \eqref{eq:polylog-inversion} once more to further replace $(-1)^n\Li_n(1/z_2)$ with $-\Li_n(z_2) - \ber{n}{z_2}$. In general, we will need the following class of functions:
\begin{definition}\label{def:MPLs}%
	Given a vector $\vect{z} = (z_1,\ldots,z_N)$ of variables, we abbreviate consecutive products with $z_{i,j} \defas \prod_{k=i}^j z_k$ where $i\leq j$.
	We define a $\Z$-module by
	\begin{equation}
		\MPLs^D_w(\vect{z})
		\defas
		\lin_{\Z} \setexp{
			\prod_{i=1}^N \ber{k_i}{z_{i,N}}
			\prod_{i=1}^s
			\Li_{\vect{n}^{(i)}}(\vect{z}^{(i)})
		}{ 
			\vect{k} \in \N_0^{N},
			s \in \N_0
		},
		\label{eq:def-MPLs}%
	\end{equation}
	where for each $1\leq i \leq s$, the integer indices $\vect{n}^{(i)} \in \N^{d_i}$ and the arguments $\vect{z}^{(i)}$ are vectors of size $d_i > 0$ such that the total depth is $d = d_1+\cdots+d_s \leq D$ and the total weight is
	$\abs{\vect{k}}+\sum_{i=1}^s \abs{\vect{n}^{(i)}} = w$.
	As arguments $\vect{z}^{(i)}$ we allow only consecutive products which are disjoint and totally ordered. Concretely, we mean that if we read all of the $d$ arguments in $\Li_{\vect{n}^{(1)}}(\vect{z}^{(1)}) \cdots \Li_{\vect{n}^{(s)}}(\vect{z}^{(s)})$ from left to right,
	\begin{equation*}
		(\vect{z}^{(1)},\ldots,\vect{z}^{(s)}) 
		= (z_{i_1,j_1},\ldots, z_{i_{d},j_{d}}),
	\end{equation*}
	then $1 \leq i_1 \leq j_1 < i_2 \leq j_2 < \cdots < i_{d} \leq j_{d} \leq N $.
\end{definition}
The constraints on the arguments exclude products like
\begin{equation*}
	\Li_{n_1}(z_1) \Li_{n_2}(z_1),
	\quad
	\Li_{n_1,n_2}(z_1, z_3) \Li_{n_3}(z_2)
	\quad\text{and also}\quad
	\Li_{n_1,n_2,n_3}(z_1,z_3,z_2)
\end{equation*}
from \eqref{eq:def-MPLs}. In particular note that each variable $z_i$ appears in at most one argument and $z_1$ can only appear in the first position.
For example, in the case of two variables $\vect{z}=(z_1,z_2)$ the explicit generators up to depth two are
\begin{align*}
	\MPLs^0_w(\vect{z})
	&= 
	\bigoplus_{\mathclap{k_1 + k_2 = w}} \Z \ber{k_1}{z_1 z_2} \ber{k_2}{z_2}
	\subset \Q[2\pi\imag,\log(-z_1 z_2),\log(-z_2)],
	\\ 
	\MPLs^1_w(\vect{z})
	&= \MPLs^0_w(\vect{z}) \oplus \bigoplus_{\mathclap{k+n=w}}
		\MPLs^0_{k}(\vect{z}) \tp
	\lin_{\Z}
	\set{
		\Li_{n}(z_1),
		\Li_{n}(z_2),
		\Li_{n}(z_1 z_2)
	}
	\quad\text{and}
	\\
	\MPLs^2_w(\vect{z})
	&= \MPLs^1_w(\vect{z}) \oplus \bigoplus_{\mathclap{k+n_1+n_2=w}} \MPLs^{0}_k(\vect{z}) \tp \lin_{\Z} \set{
		\Li_{n_1}(z_1) \Li_{n_2}(z_2),
		\Li_{n_1, n_2}(z_1,z_2)
	}.
\end{align*}
All of the generators \eqref{eq:def-MPLs} are in fact linearly independent, which follows from their representation as iterated integrals and can be proved inductively through differentiation \cite{Panzer:PhD}. So indeed we can write direct sums; however, this linear independence will not play any role in the sequel.

In theorem~\ref{thm:generalized-parity} we did not have to bother about the fine structure of the arguments $\vect{z}^{(i)}$; knowing that they are products of the original variables is enough to conclude the reduction in depth for roots of unity in \eqref{eq:parity-Nth-roots} (corollary~\ref{cor:generalized-parity}).
But for our proof it is essential to keep track of the arguments very closely.
The following result shows that $\MPLs^D_w(\vect{z})$ contains all primitives we need to integrate $\partial_{z_1} \PLi_{\vect{n}}(\vect{z})$, like the right-hand side of \eqref{eq:diff-PLi_1,n}:
\begin{lemma}
	\label{lemma:primitives}%
	Every element of $\frac{1}{z_1} \MPLs_{w}^{D}(\vect{z})$ has a $\partial_{z_1}$-primitive in $\MPLs_{w+1}^{D}(\vect{z})$ and
	every element of $\frac{1}{1-z_1} \big[\MPLs_{w}^{D}(\vect{z}')+\MPLs_{w}^{D}(\vect{z}'')\big]$ has a $\partial_{z_1}$-primitive in $\MPLs_{w+1}^{D+1}(\vect{z})$.\footnote{%
	As before, $\vect{z}'=(z_2,\ldots,z_N)$ and $\vect{z}'' = (z_1 z_2, z_3, \ldots, z_N)$.
}%
\end{lemma}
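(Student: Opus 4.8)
The plan is to prove the two assertions in sequence, since the second will rely on the first. Both rest on the explicit action of $\partial_{z_1}$ on the generators in \eqref{eq:def-MPLs}. The variable $z_1$ enters a generator only through the leading Bernoulli factor $\ber{k_1}{z_{1,N}}$ and, at most, through the first argument of the first polylogarithm factor, which must then be a consecutive product $z_{1,j_1}$ starting at $z_1$; all remaining factors, which I collect into a $z_1$-independent expression $R$, are inert under $\partial_{z_1}$. I will use repeatedly the two \emph{raising} identities $\partial_{z_1}\ber{k+1}{z_{1,N}}=\tfrac1{z_1}\ber{k}{z_{1,N}}$ (immediate from \eqref{eq:ber}) and $\partial_{z_1}\Li_{n_1+1,\dots}(z_{1,j_1},\dots)=\tfrac1{z_1}\Li_{n_1,\dots}(z_{1,j_1},\dots)$ (from \eqref{eq:diff-MPL}), together with $\partial_{z_1}\Li_1(z_1)=\tfrac1{1-z_1}$.

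For the first assertion I integrate $\tfrac1{z_1}G$ for a generator $G=\ber{k_1}{z_{1,N}}\,\Li_{\vect{n}^{(1)}}(\vect{z}^{(1)})\,R$ by the telescoping antiderivative
\[
 S=\sum_{t=0}^{k_1}(-1)^t\,\ber{k_1-t}{z_{1,N}}\,\Li_{\vect{n}^{(1)}+(t+1)\vect{e}_1}(\vect{z}^{(1)})\,R ,
\]
whose $z_1$-derivative collapses, upon using the two raising identities and the convention $\ber{-1}{\cdot}=0$, to exactly $\tfrac1{z_1}G$. The essential point is that every differentiated polylogarithm here has first index $\ge 2$, so no $\tfrac1{1-z_1}$ terms are ever produced; since $S$ only raises indices and leaves the arguments untouched, it lies in $\MPLs_{w+1}^{D}(\vect{z})$, with the depth bound preserved. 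When $z_1$ occurs in no polylogarithmic argument this degenerates to $S=\ber{k_1+1}{z_{1,N}}R$.

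The second assertion splits according to $\vect{z}'$ and $\vect{z}''$. For $\vect{z}'$ the integrand is $z_1$-independent and $\Li_1(z_1)\,G$ is a primitive lying in $\MPLs_{w+1}^{D+1}(\vect{z})$, the new factor $\Li_1(z_1)$ raising the depth by one and occupying the first argument slot. The case $\vect{z}''$ is the crux, because $z_1$ now sits inside $G$ through the product $z_1z_2$, so a naive integration by parts with $\Li_1(z_1)$ would create expressions in which $z_1$ appears in two different factors and hence are not admissible generators. I will instead first treat $k_1=0$: writing the first polylogarithm as $\Phi=\Li_{\vect{n}^{(1)}}(z_{1,j_1},\dots)$ and introducing $\Psi=\Li_{1,\vect{n}^{(1)}}(z_1,z_{2,j_1},\dots)$, the differential \eqref{eq:diff-MPL-one} read backwards gives
\[
 \frac{\Phi R}{1-z_1}=\frac{\Li_{\vect{n}^{(1)}}(z_{2,j_1},\dots)\,R}{1-z_1}-\partial_{z_1}\!\big(\Psi R\big)-\frac{\Phi R}{z_1},
\]
whose three pieces are antidifferentiated by the $\vect{z}'$ case, by the term $-\Psi R$ itself, and by the first assertion, all landing in $\MPLs_{w+1}^{D+1}(\vect{z})$. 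For general $k_1\ge1$ I peel off the Bernoulli with a single integration by parts, $\tfrac{\ber{k_1}{z_{1,N}}H}{1-z_1}=\partial_{z_1}[\ber{k_1}{z_{1,N}}\Pi]-\tfrac1{z_1}\ber{k_1-1}{z_{1,N}}\Pi$, where $\Pi$ is the primitive just constructed for the reduced generator $H=G/\ber{k_1}{z_{1,N}}$; the remaining pure $\tfrac1{z_1}$-integrand is handled by the first assertion.

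The main obstacle is precisely the $\vect{z}''$ bookkeeping. The delicate points are that $\Pi$ must be free of any $\ber{\cdot}{z_{1,N}}$ factor---otherwise $\ber{k_1-1}{z_{1,N}}\Pi$ would carry two Bernoulli functions with the same argument and fall outside the generating set---and that every intermediate term must keep its arguments consecutive, disjoint and correctly ordered while respecting the depth bookkeeping ($D$ in the first assertion, $D+1$ in the second). The reverse use of \eqref{eq:diff-MPL-one} is what simultaneously avoids the inadmissible $z_1$-in-two-factors products and the spurious $\tfrac1{1-z_{1,j_1}}$ singularities that a direct differentiation of $\Phi$ would generate; verifying that each of its pieces is a genuine generator of the asserted depth is the part that needs care.
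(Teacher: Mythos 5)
Your proof is correct and follows essentially the same route as the paper: you isolate the same two possible $z_1$-dependent factors of a generator and build the same explicit primitives, your telescoping sum for the $\tfrac{1}{z_1}$ case being literally \eqref{eq:primitive-ber-0}/\eqref{eq:primitive-MPL-0}, and your $\vect{z}'$-primitive $\Li_1(z_1)f$ the $k=0$ instance of \eqref{eq:primitive-ber-1}. For the crucial $\vect{z}''$ case you merely repackage the paper's closed formula \eqref{eq:primitive-MPL-1} as a recursion---your reversed use of \eqref{eq:diff-MPL-one} produces exactly the paper's three bracket terms at $\mu=0$, and peeling off $\ber{k_1}{z_{1,N}}$ by parts and invoking the first assertion generates the alternating sum over $\mu$---so unrolling your recursion reproduces the paper's primitive term by term, including your correctly flagged requirements that $j\geq 2$ and that the intermediate primitive be free of $\ber{\cdot}{z_{1,N}}$ factors.
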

\begin{proof}
	By definition~\ref{def:MPLs}, each basis function in \eqref{eq:def-MPLs} has at most two factors which depend on $z_1$: $\ber{k}{z_{1,N}}$ and the first factor $\Li_{\vect{n}^{(1)}}(\vect{z}^{(1)})$. All other factors are constants with respect to $z_1$ and can be ignored here.
	We proceed by giving explicit formulas for the primitives which can be verified easily via differentiation using \eqref{eq:diff-MPL} and \eqref{eq:diff-MPL-one}.

	If the $z_1$-dependence only comes from logarithms, we can use
	\begin{align}
		\frac{\ber{k}{z_{1,N}}}{z_1}
		&=
		\partial_{z_1} \ber{k+1}{z_{1,N}}
		\quad\text{and}
		\label{eq:primitive-ber-0}\\
		\frac{\ber{k}{z_{1,N}}}{1-z_1}
		&=
		\partial_{z_1} \sum_{\mu=0}^k (-1)^{\mu}
		\Li_{1+\mu}(z_1) \ber{k-\mu}{z_{1,N}}
		.
		\label{eq:primitive-ber-1}%
	\end{align}
	Otherwise we have to integrate a function of the form $\ber{k}{z_{1,N}} \Li_{\vect{n}}(z_{1,j}, \vect{y})$ with some index $1\leq j \leq N$ and a sequence $\vect{y}$ of $z_1$-independent arguments. Integrating $\dd z_1/{z_1}$ does not change the depth:
	\begin{equation}
		\frac{\ber{k}{z_{1,N}} \Li_{\vect{n}}(z_{1,j}, \vect{y})}{z_1}
		=
		\partial_{z_1} \sum_{\mu=0}^k (-1)^{\mu} \ber{k-\mu}{z_{1,N}} 
		\Li_{\vect{n}+(1+\mu)\vect{e}_1}(z_{1,j}, \vect{y}) 
		.
		\label{eq:primitive-MPL-0}%
	\end{equation}
	If the denominator is $1-z_1$, we consider a function in $\MPLs^D_w(\vect{z}'')=\MPLs^D_w(z_1 z_2,\ldots)$ because the case of $\MPLs^D_w(\vect{z}')$ was dealt with in \eqref{eq:primitive-ber-1}. Therefore we must have $j\geq 2$ and find a primitive which increases the depth by one:
	\begin{multline}
		\frac{\ber{k}{z_{1,N}}\Li_{\vect{n}}(z_{1,j},\vect{y})}{1-z_1}
		=\partial_{z_1}
		\sum_{\mu=0}^{k} (-1)^{\mu} \ber{k-\mu}{z_{1,N}} \Big[
			\Li_{1+\mu}(z_1)\Li_{\vect{n}}(z_{2,j}, \vect{y})
		\\
			-\Li_{1+\mu,\vect{n}}(z_1, z_{2,j}, \vect{y})
			-\Li_{\vect{n}+(1+\mu)\vect{e}_1}(z_{1,j}, \vect{y})
		\Big]
		.
		\qedhere
		\label{eq:primitive-MPL-1}%
	\end{multline}
\end{proof}
Applied to the right-hand side of example \eqref{eq:diff-PLi_1,n}, this construction yields\footnote{For brevity, we do not rewrite $(-1)^n \Li_n(1/z_2) = -\Li_n(z_2) - \ber{n}{z_2}$ explicitly.}
\begin{equation}\begin{split}\label{eq:PLi_1,n-primitive}
	\MPLs_{n+1}^1(\vect{z}) \ni
	F
	&\defas \sum_{\mu=0}^{n} (-1)^{\mu} \Li_{1+\mu}(z_1) \ber{n-\mu}{z_1 z_2}
	- \Li_1(z_1) \ber{n}{z_2}
	\\
	&\quad - \Li_{n+1}(z_1 z_2)
	+(-1)^n \Li_n(1/z_2) \ber{1}{z_1 z_2}
\end{split}\end{equation}
such that $\partial_{z_1} \big[\PLi_{1,n}(\vect{z}) - F\big] = 0$. 
The final ingredient is a method to determine the constant of integration (it depends on $z_2$ in this case), which we will fix by considering the limit $z_1 \rightarrow 0$. 
Note that according to \eqref{eq:def-MPL}, all $z_1$-dependent MPL $\Li_{\vect{n}}(z_{1,j},\ldots)$ vanish in this limit, so logarithms $\ber{k}{z_{1,N}}$ provide the only divergences. Let us write $f \sim g$ if $\lim_{z_1\rightarrow 0} (f-g) = 0$.
\begin{lemma}\label{lem:reglim}
	Given $\vect{n}=(n_1,\vect{n}') \in \N^d$, the behaviour of $\Li_{\vect{n}}(1/\vect{z})$ as $z_1\rightarrow 0$ is
	\begin{equation}\label{eq:reglim}
		\Li_{\vect{n}}(1/\vect{z})
		\sim (-1)^{1+n_1} \sum_{\mathclap{\abs{\vect{k}}=n_1}}
		\ber{k_1}{z_{1,d}}
		\Li_{\vect{n}'+\vect{k}'}(1/\vect{z}')
		\prod_{\mu=2}^d \binom{n_{\mu}-1+k_{\mu}}{k_{\mu}}
		.
	\end{equation}
\end{lemma}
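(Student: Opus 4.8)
The plan is to prove the lemma by induction on the weight $\abs{\vect{n}}$, following the same differentiate--integrate philosophy used for theorem~\ref{thm:generalized-parity}. Write $G_{\vect{n}}(\vect{z})$ for the claimed right-hand side of \eqref{eq:reglim} and set $D_{\vect{n}} \defas \Li_{\vect{n}}(1/\vect{z}) - G_{\vect{n}}$; the goal is $D_{\vect{n}} \to 0$ as $z_1 \to 0$. I would in fact strengthen the inductive claim to $D_{\vect{n}} = \bigo{z_1 \log^b z_1}$, i.e.\ $D_{\vect{n}}$ has no term of order $z_1^0$ at all, because this is what the recursion actually propagates. All depth-one cases are immediate: $\Li_{n_1}(1/z_1) = (-1)^{1+n_1}\ber{n_1}{z_1} + (-1)^{1+n_1}\Li_{n_1}(z_1)$ by the inversion formula \eqref{eq:polylog-inversion}, and $\Li_{n_1}(z_1) = \bigo{z_1}$, so these provide both the base case $\abs{\vect{n}}=1$ and the anchor for the depth recursion.

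The first step is purely formal: I would check that $G_{\vect{n}}$ obeys the same $z_1$-differential recursions as $\Li_{\vect{n}}(1/\vect{z})$. For $n_1>1$ this is clean, since the only $z_1$-dependence of $G_{\vect{n}}$ sits in the factors $\ber{k_1}{z_{1,d}}$, on which $z_1 \partial_{z_1}\ber{k_1}{z_{1,d}} = \ber{k_1-1}{z_{1,d}}$ by \eqref{eq:primitive-ber-0}; shifting the summation index $k_1 \mapsto k_1+1$ then yields $\partial_{z_1} G_{\vect{n}} = -\tfrac{1}{z_1} G_{\vect{n}-\vect{e}_1}$, matching \eqref{eq:diff-MPL}, so that $\partial_{z_1} D_{\vect{n}} = -\tfrac{1}{z_1} D_{\vect{n}-\vect{e}_1}$ and the induction hypothesis controls the right-hand side. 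For the remaining case $n_1=1$ I would use \eqref{eq:diff-MPL-one-inverse}: after writing $\tfrac{1}{z_1(1-z_1)} = \tfrac{1}{z_1} + \tfrac{1}{1-z_1}$, the singular piece $\Li_{\vect{n}'}(1/\vect{z}')/z_1$ is matched exactly by $\partial_{z_1} G_{1,\vect{n}'}$, while the surviving contribution $\bigl[\Li_{\vect{n}'}(1/\vect{z}') - \Li_{\vect{n}'}(1/\vect{z}'')\bigr]/(1-z_1)$ involves the consecutive product $z_1 z_2$ in the first slot of $\vect{z}''$ and is therefore governed, as $z_1\to 0$, by the lower-weight instance of the lemma applied with $z_1 z_2$ in place of $z_1$. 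In both cases one finds that $\partial_{z_1} D_{\vect{n}}$ carries no $z_1^{-1}$ term, which is exactly the statement that the logarithmic (Bernoulli) part of $G_{\vect{n}}$ already reproduces the full $\ber{k_1}{z_{1,d}}$-tower of $\Li_{\vect{n}}(1/\vect{z})$; the binomial weights $\binom{n_\mu - 1 + k_\mu}{k_\mu}$ emerge from iterating these reductions, with \eqref{eq:primitive-ber-0} raising the Bernoulli degree $k_1$ and the depth reduction at $n_1=1$ distributing the remaining weight onto the index shifts $k_\mu$.

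The main obstacle is the constant of integration. Having shown that $\partial_{z_1} D_{\vect{n}}$ has no $z_1^{-1}$ term, the only obstruction to $D_{\vect{n}} = \bigo{z_1 \log^b z_1}$ is a single $z_1$-independent constant $C_{\vect{n}}(z_2,\dots,z_d)$; all $\log^b z_1$ contributions at order $z_1^0$ are matched, and the remainder is $\bigo{z_1\log^b z_1} \to 0$. Since $\partial_{z_1}$ annihilates $C_{\vect{n}}$, no amount of integration can detect it, so one must supply a boundary value of the genuine function $\Li_{\vect{n}}(1/\vect{z})$. The natural anchor is the opposite limit $z_1 \to \infty$, where $\Li_{\vect{n}}(1/\vect{z}) \to 0$ because its first argument tends to $0$; equivalently, $C_{\vect{n}}$ is the regularized $z_1 \to 0$ limit of $\Li_{\vect{n}}(1/\vect{z})$, and the content of the lemma is precisely that this finite part coincides with the constant term of $G_{\vect{n}}$. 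I expect the verification that the data assembled from the lower-weight instances reorganizes into the single sum over $\abs{\vect{k}} = n_1$ with exactly the stated binomial coefficients to be the crux of the argument, the rest being bookkeeping that the primitives \eqref{eq:primitive-ber-0} already render routine.
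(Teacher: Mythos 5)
Your differential bookkeeping is sound as far as it goes: writing $G_{\vect{n}}$ for the right-hand side of \eqref{eq:reglim}, one checks exactly as you say that $\partial_{z_1} G_{\vect{n}} = -\tfrac{1}{z_1} G_{\vect{n}-\vect{e}_1}$ for $n_1>1$, and that for $n_1=1$ the $1/z_1$ part of \eqref{eq:diff-MPL-one-inverse} is matched by $\partial_{z_1} G_{1,\vect{n}'} = \tfrac{1}{z_1}\Li_{\vect{n}'}(1/\vect{z}')$. But this determines $G_{\vect{n}}$ only up to a $z_1$-independent function at \emph{every} inductive step: the ODE fixes the strata $k_1\geq 1$ of the sum in \eqref{eq:reglim} in terms of lower weight, while the entire $k_1=0$ stratum $(-1)^{1+n_1}\sum_{\abs{\vect{k}'}=n_1}\Li_{\vect{n}'+\vect{k}'}(1/\vect{z}')\prod_{\mu}\binom{n_\mu-1+k_\mu}{k_\mu}$ --- where all the binomial structure of the lemma lives, and which is precisely what the proof of theorem~\ref{thm:generalized-parity-MPLs} consumes as its constant of integration --- is annihilated by $\partial_{z_1}$ and must be supplied from outside at each weight. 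You acknowledge this is the crux, but your proposed anchor fails. At $z_1\to\infty$ one has $\Li_{\vect{n}}(1/\vect{z})\to 0$, yet $G_{\vect{n}}$ does \emph{not} vanish there: its $\ber{k_1}{z_{1,d}}$ factors diverge logarithmically and its $k_1=0$ part tends to a nonzero constant, so $D_{\vect{n}}=\Li_{\vect{n}}(1/\vect{z})-G_{\vect{n}}$ has no useful boundary value at infinity. Worse, transporting any boundary value from $z_1=\infty$ to $z_1=0$ along $\partial_{z_1} D_{\vect{n}} = -\tfrac{1}{z_1} D_{\vect{n}-\vect{e}_1}$ would require knowing $D_{\vect{n}-\vect{e}_1}$ \emph{globally} on the path, whereas your induction hypothesis is purely local at $z_1=0$ (an asymptotic statement that discards $\bigo{z_1\log^b z_1}$ terms which are genuinely present). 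So the induction cannot close: computing the missing constant is equivalent to computing the regularized limit of $\Li_{\vect{n}}(1/\vect{z})$ at $z_1\to 0$, which is the content of the lemma itself.

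The paper sidesteps this entirely and uses no differential equations: it works with the hyperlogarithm representation \eqref{eq:MPL-as-Hlog} and applies the shuffle identity \eqref{eq:shuffle-reg} to the word, isolating the letter $\letter{z_{1,d}}$. The divergent pieces factor as $\int_0^1 \letter{0}^{n_1-1-k}\letter{z_{1,d}} = -\Li_{n_1-k}(1/z_{1,d}) \sim (-1)^{n_1-k}\ber{n_1-k}{z_{1,d}}$ by the depth-one inversion \eqref{eq:polylog-inversion}, while the one remaining term, with $\letter{z_{1,d}}$ shuffled into the interior of the word, is \emph{continuous} at $z_1=0$ and is simply evaluated there ($\letter{z_{1,d}}\to\letter{0}$); expanding $u\shuffle\letter{0}^{n_1-k}$ then counts words and produces exactly the binomial coefficients. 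This single algebraic identity yields the whole tower, constant term included, with no integration constants to fix. To salvage your route you would need an independent computation of that regularized limit --- for instance precisely this shuffle argument, or the machinery behind lemma~\ref{lem:z_d->1} --- at which point the differential induction becomes redundant.
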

\begin{proof}
	We exploit the representation from \cite{Goncharov:PolylogsArithmeticGeometry,Goncharov:MplCyclotomyModularComplexes}, which expresses MPL as iterated integrals \cite{Chen:AlgebrasOfII} of the differential forms $\letter{\sigma} \defas \dd t/(t-\sigma)$,
\begin{equation}\label{eq:MPL-as-Hlog} \!\!
	(-1)^d \Li_{\vect{n}}(1/\vect{z})
	= \!\!
	\int_0^1 \!
		\letter{0}^{n_d-1} \letter{z_d}
		\letter{0}^{n_{d-1}-1} \letter{z_{d-1} z_d}
		\!\cdots
		\letter{0}^{n_{2}-1}\letter{z_{2,d}} 
		\letter{0}^{n_1-1} \letter{z_{1,d}}
	.
\end{equation}
These are called hyperlogarithms \cite{LappoDanilevsky:CorpsRiemann,Poincare:GroupesEquationsLineaires} and defined by
\begin{equation*}\label{eq:Hlog}
	\int_0^1 \letter{\sigma_r} \!\cdots\letter{\sigma_1}
	= \int_{0}^{1}\frac{\dd t_r}{t_r-\sigma_r} \int_{0}^{t_r}\frac{\dd t_{r-1}}{t_{r-1}-\sigma_{r-1}} \cdots \int_0^{t_2} \frac{\dd t_1}{t_1 - \sigma_1}.
\end{equation*}
By \cite{Ree:LieShuffles} they are multiplicative,
$\int_0^1 u \cdot \int_0^1 v = \int_0^1 (u \shuffle v)$,
with respect to the commutative shuffle product defined recursively by
\begin{equation}
	(\letter{\sigma} u) \shuffle (\letter{\tau}  v)
	= \letter{\sigma}(u \shuffle (\letter{\tau} v))
	+ \letter{\tau}((\letter{\sigma} u) \shuffle v)
	\label{eq:shuffle-product}%
\end{equation}
for arbitrary (including empty) words $u$ and $v$ in the tensor algebra generated by the forms $\letter{\sigma}$.
We apply the simple combinatorial identity \cite[lemma~3.2.5]{Panzer:PhD}
\begin{equation}
	u\letter{\tau} \letter{\sigma_1}\!\cdots\letter{\sigma_r}
	= \sum_{k=0}^r (-1)^{k}\left[ u \shuffle \letter{\sigma_{k}}\!\cdots \letter{\sigma_{1}} \right] \letter{\tau}
	\shuffle \letter{\sigma_{k+1}}\!\cdots\letter{\sigma_r}
	\label{eq:shuffle-reg}%
\end{equation}
to $u=\letter{0}^{n_d-1}\letter{z_d}\!\cdots \letter{z_{3,d}}\letter{0}^{n_2-1}$ with $\tau=z_{2,d}$ and thus rewrite \eqref{eq:MPL-as-Hlog} as
\begin{equation*}\begin{split}
	(-1)^d \Li_{\vect{n}}(1/\vect{z})
	&=\sum_{k=0}^{n_1-1} (-1)^k \int_0^1 (u \shuffle \letter{0}^k) \letter{z_{2,d}} \int_0^1 \letter{0}^{n_1-1-k} \letter{z_{1,d}}
	\\&\quad
	+(-1)^{n_1} \int_0^1 (u \shuffle \letter{z_{1,d}}\letter{0}^{n_1-1}) \letter{z_{2,d}}
	.
\end{split}\end{equation*}
The last term is continuous at $z_1 \rightarrow 0$ (so that we can just substitute $z_1=0$), and inside the sum we use \eqref{eq:MPL-as-Hlog} and \eqref{eq:polylog-inversion} to rewrite
$
	\int_0^1 \letter{0}^{n_1-1-k}\letter{z_{1,d}}
	=
	-\Li_{n_1-k}(1/z_{1,d})
	\sim
	(-1)^{n_1-k}\ber{n_1-k}{z_{1,d}}
$. 
This proves
\begin{equation*}
	\Li_{\vect{n}}(1/\vect{z})
	\sim (-1)^{n_1+d}\sum_{k=0}^{n_1} 
	\ber{k}{z_{1,d}}
	\int_0^1 \left( u \shuffle \letter{0}^{n_1-k} \right)\letter{z_{2,d}}
\end{equation*}
and we conclude by expanding the products $u \shuffle \letter{0}^{n_1-k}$:
The coefficients of individual words are counted by the binomial coefficients in \eqref{eq:reglim} and we rewrite their hyperlogarithms via \eqref{eq:MPL-as-Hlog} as MPL with depth $d-1$. 
\end{proof}
\begin{remark}\label{rem:order-of-limits}
	All terms in \eqref{eq:reglim} have depth $d-1$ and no MZV occur in the expansion of $\Li_{\vect{n}}(\vect{z})$ at $z_1\rightarrow 0$ (except for powers of $\pi$ in \eqref{eq:ber}, to which we assign depth zero).\footnote{%
		In the language of regularized limits (which annihilate logarithmic divergences, see \cite{Brown:TwoPoint,Panzer:PhD}), the essence of lemma~\ref{lem:reglim} is that
	$
		\AnaReg{z_d}{\infty}\cdots\AnaReg{z_1}{\infty} \Li_{\vect{n}}(\vect{z})
		\in (2\pi\imag)^{\abs{\vect{n}}}\Q
	$
	does not involve any MZV.
}
	This is crucial for our proof and the reason why we set up our recursion with respect to the first variable $z_1$: While the differential behaviour is analogous if we considered $z_d$ instead, the limiting behaviour $z_d\rightarrow 0$ is more complicated. An analogue of lemma~\ref{lem:reglim} for this case is not obvious.
\end{remark}
Continuing our example from above, we obtain
\begin{equation*}
	(-1)^n\PLi_{1,n}(\vect{z})
	\sim \Li_{1,n}(1/\vect{z})
	\sim n \Li_{n+1}(1/z_2) + \ber{1}{z_1 z_2} \Li_{n}(1/z_2)
	.
\end{equation*}
The divergent contribution $F \sim (-1)^n \ber{1}{z_1 z_2} \Li_n(1/z_2)$ is already contained in \eqref{eq:PLi_1,n-primitive}, but we now also know the constant of integration and find\footnote{%
	To check that this reproduces \eqref{eq:Li_1,2(z1,z2)} in the special case $n=2$, use \eqref{eq:polylog-inversion} to rewrite $\Li_2(1/z_2)$ and $\Li_3(1/z_2)$ and substitute $\ber{1}{z}=\log(-z)$, $\ber{2}{z} = \frac{1}{2}\log^2(-z)+\mzv{2}$ and $\ber{3}{z} = \frac{1}{6} \log^3(-z) + \mzv{2} \log(-z)$.%
}
\begin{align}
	\PLi_{1,n}(\vect{z})
	&= F + n(-1)^{n} \Li_{n+1}(1/z_2)
	\label{eq:PLi_1,n}\\
	&= \sum_{\mu=0}^{n} (-1)^{\mu} \Li_{1+\mu}(z_1) \ber{n-\mu}{z_1 z_2}
	- \Li_1(z_1) \ber{n}{z_2}
	- \Li_{n+1}(z_1 z_2)
	\nonumber\\
	&\quad 
	+(-1)^n \big[n \Li_{n+1}(1/z_2)+ \Li_n(1/z_2) \ber{1}{z_1 z_2} \big]
	\in \MPLs_{n+1}^1(z_1, z_2). \nonumber
\end{align}
Finally, we can now prove our parity theorem~\ref{thm:generalized-parity}, formulated inside the very explicitly described module $\MPLs^D_w(\vect{z})$ of multiple polylogarithms from \eqref{eq:def-MPLs}.
\begin{theorem}
	For all indices $\vect{n} \in \N^d$, the combination $\PLi_{\vect{n}}(\vect{z})$ of MPL from \eqref{eq:def-PLi} is of depth at most $d-1$: $\PLi_{\vect{n}}(\vect{z}) \in \MPLs_{\abs{\vect{n}}}^{d-1}(\vect{z})$.
	\label{thm:generalized-parity-MPLs}%
\end{theorem}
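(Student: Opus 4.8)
The plan is to prove the statement by induction, organised primarily by the depth $d$ and, within a fixed depth, by the weight $w=\abs{\vect{n}}$. The base case $d=1$ is exactly the classical inversion formula \eqref{eq:polylog-inversion}, which gives $\PLi_n(z)=-\ber{n}{z}\in\MPLs^0_w(z)$. For the inductive step I would assume the theorem for every depth $<d$ (at all weights) together with, inside depth $d$, every weight $<w$. The strategy for a single index $\vect{n}\in\N^d$ is the one advertised in the excerpt: differentiate $\PLi_{\vect{n}}(\vect{z})$ in $z_1$ to lower the weight, recognise the derivative as an element of $\frac1{z_1}\MPLs^{d-1}_{w-1}(\vect{z})+\frac1{1-z_1}\bigl[\MPLs^{d-2}_{w-1}(\vect{z}')+\MPLs^{d-2}_{w-1}(\vect{z}'')\bigr]$, integrate with lemma~\ref{lemma:primitives} to obtain a primitive $P\in\MPLs^{d-1}_w(\vect{z})$, and finally pin down the ($z_1$-independent) constant of integration through the limit $z_1\to0$.

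For the differential step I would separate two cases. If $n_1>1$, the elementary differentials \eqref{eq:diff-MPL} collapse to $\partial_{z_1}\PLi_{\vect{n}}(\vect{z})=\frac1{z_1}\PLi_{\vect{n}-\vect{e}_1}(\vect{z})$, because the sign $(-1)^{\abs{\vect{n}}_0}$ flips under $\vect{n}\mapsto\vect{n}-\vect{e}_1$; the weight induction then places $\PLi_{\vect{n}-\vect{e}_1}(\vect{z})$ in $\MPLs^{d-1}_{w-1}(\vect{z})$. If $n_1=1$, I would substitute \eqref{eq:diff-MPL-one} and \eqref{eq:diff-MPL-one-inverse} and regroup by the denominators $\frac1{z_1}$ and $\frac1{1-z_1}$ (using $\tfrac1{z_1(1-z_1)}=\tfrac1{z_1}+\tfrac1{1-z_1}$). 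The clean outcome, generalising \eqref{eq:diff-PLi_1,n}, is that the $\frac1{1-z_1}$-part equals $\PLi_{\vect{n}'}(\vect{z}')-\PLi_{\vect{n}'}(\vect{z}'')$ while the $\frac1{z_1}$-part equals $-\Li_{\vect{n}'}(\vect{z}')-\Li_{\vect{n}'}(\vect{z}'')+\PLi_{\vect{n}'}(\vect{z}')$. Here the single polylogarithms already carry consecutive-product arguments and lie in $\MPLs^{d-1}_{w-1}(\vect{z})$, whereas the depth induction hypothesis puts $\PLi_{\vect{n}'}(\vect{z}')$ into $\MPLs^{d-2}_{w-1}(\vect{z}')$ resp.\ $\PLi_{\vect{n}'}(\vect{z}'')$ into $\MPLs^{d-2}_{w-1}(\vect{z}'')$. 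This is precisely where I use the hypothesis to trade the ``wrong'' inverse argument $\Li_{\vect{n}'}(1/\vect{z}')$ for $\Li_{\vect{n}'}(\vect{z}')-\PLi_{\vect{n}'}(\vect{z}')$. Lemma~\ref{lemma:primitives} then yields the desired primitive $P\in\MPLs^{d-1}_w(\vect{z})$ in both cases.

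It remains to fix $C\defas\PLi_{\vect{n}}(\vect{z})-P$, which is independent of $z_1$ since $\partial_{z_1}(\PLi_{\vect{n}}-P)=0$. I would read it off from the limit $z_1\to0$: by the series \eqref{eq:def-MPL} the part $\Li_{\vect{n}}(\vect{z})$ vanishes, so $C$ is the regularised $z_1\to0$ value of $-(-1)^{\abs{\vect{n}}_0}\Li_{\vect{n}}(1/\vect{z})-P$. Lemma~\ref{lem:reglim} supplies the asymptotics of $\Li_{\vect{n}}(1/\vect{z})$ in terms of the factors $\ber{k_1}{z_{1,d}}$ and depth-$(d-1)$ polylogarithms, the logarithmic divergences cancelling against those of $P$, and by remark~\ref{rem:order-of-limits} no multiple zeta values appear.

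The main obstacle, and the reason the induction must run primarily over the depth, is this last point: lemma~\ref{lem:reglim} returns the boundary terms as polylogarithms $\Li_{\vect{n}'+\vect{k}'}(1/\vect{z}')$ at the inverse arguments $1/\vect{z}'$, which are not consecutive products and therefore not visibly elements of $\MPLs^{d-1}(\vect{z})$. They have depth $d-1$ but weight up to $w$, i.e.\ the same weight that is currently under treatment, so a weight induction cannot absorb them. I would remove them by invoking the induction hypothesis once more, now at the strictly smaller depth $d-1$: it rewrites each $\Li_{\vect{m}}(1/\vect{z}')$ as $\pm\bigl[\Li_{\vect{m}}(\vect{z}')-\PLi_{\vect{m}}(\vect{z}')\bigr]$ with $\Li_{\vect{m}}(\vect{z}')\in\MPLs^{d-1}_{\abs{\vect{m}}}(\vect{z})$ and $\PLi_{\vect{m}}(\vect{z}')\in\MPLs^{d-2}_{\abs{\vect{m}}}(\vect{z}')\subseteq\MPLs^{d-1}_{\abs{\vect{m}}}(\vect{z})$. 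Combined with the routine observation that the regularised $z_1\to0$ limit of any generator of $\MPLs^{d-1}_w(\vect{z})$ again lands in $\MPLs^{d-1}_w(\vect{z})$ --- the factor $\ber{k_1}{z_{1,d}}$ regularises to a power of $2\pi\imag$, and any leading polylogarithm carrying $z_1$ vanishes --- this shows $C\in\MPLs^{d-1}_w(\vect{z})$ and hence $\PLi_{\vect{n}}(\vect{z})=P+C\in\MPLs^{d-1}_w(\vect{z})$, closing the induction.
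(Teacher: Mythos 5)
Your proposal is correct and takes essentially the same route as the paper: the same case split via the derivative formulas (yielding \eqref{eq:diff-PLi} for $n_1>1$ and the key regrouping \eqref{eq:diff-PLi-one} for $n_1=1$), integration by lemma~\ref{lemma:primitives}, and the constant of integration fixed at $z_1\to 0$ via lemma~\ref{lem:reglim}. Your only deviation is bookkeeping: the paper states a recursion over the weight alone and leaves implicit the rewriting $\Li_{\vect{m}}(1/\vect{z}')=(-1)^{\abs{\vect{m}}_0}\bigl[\Li_{\vect{m}}(\vect{z}')-\PLi_{\vect{m}}(\vect{z}')\bigr]$ of the same-weight, depth-$(d-1)$ boundary terms (cf.\ the footnote to \eqref{eq:PLi_1,n}), whereas you correctly identify this as the delicate point and make the lexicographic (depth, weight) induction explicit --- a sharpening of the paper's presentation, not a different proof.
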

\begin{proof}
	We proceed by recursion over the weight $w=\abs{\vect{n}}$. The depth one case is \eqref{eq:polylog-inversion}, so let us assume $d \geq 2$. If $n_1 > 1$, we use \eqref{eq:diff-MPL} to find
\begin{equation}\label{eq:diff-PLi}
	\partial_{z_1} \PLi_{\vect{n}}(\vect{z})
	= \frac{1}{z_1} \PLi_{\vect{n}-\vect{e}_1}(\vect{z})
\end{equation}
which has depth at most $d-1$ (by induction over the weight), and integrating $z_1$ does not increase the depth according to lemma~\ref{lemma:primitives}. What remains to be checked is that the correct constant of integration also lies in $\MPLs_{w}^{d-1}(\vect{z})$. This is evident from \eqref{eq:reglim}.

Now consider the case $n_1 = 1$. From \eqref{eq:diff-MPL-one} and \eqref{eq:diff-MPL-one-inverse} we find the key formula
\begin{equation}
	\partial_{z_1} \PLi_{\vect{n}}(\vect{z})
	=
	\frac{\PLi_{\vect{n}'}(\vect{z}')-\Li_{\vect{n}'}(\vect{z}') - \Li_{\vect{n}'}(\vect{z}'')}{z_1}
	+\frac{\PLi_{\vect{n}'}(\vect{z}') - \PLi_{\vect{n}'}(\vect{z}'')}{1-z_1}
	,
	\label{eq:diff-PLi-one}%
\end{equation}
where $\vect{z}=(z_1,\vect{z}')$ and $\vect{z}'' = (z_1 z_2, z_3, \ldots, z_d)$.
It is crucial here that the signs work out such that the MPL with denominator $1-z_1$ can be grouped together as $\PLi_{\vect{n}'}(\vect{z}')$ and $\PLi_{\vect{n}'}(\vect{z}'')$, because we know by induction that these have depth at most $d-2$. We conclude that
\begin{equation*}
	\partial_{z_1} \PLi_{\vect{n}}(\vect{z})
	\in
	\frac{1}{z_1} \MPLs_{w-1}^{d-1}(\vect{z})
	+
	\frac{1}{1-z_1}\left(\MPLs_{w-1}^{d-2}(\vect{z}') + \MPLs_{w-1}^{d-2}(\vect{z}'') \right)
\end{equation*}
and hence find a $\partial_{z_1}$-primitive $f \in \MPLs_{w}^{d-1}(\vect{z})$ such that $\partial_{z_1} \PLi_{\vect{n}}(\vect{z}) = \partial_{z_1} f$ according to lemma~\ref{lemma:primitives}.
From \eqref{eq:reglim} we know that $\PLi_{\vect{n}}(\vect{z}) - f \sim g \in \MPLs^w_{d-1}(\vect{z}')$ and thus finally conclude that $\PLi_{\vect{n}}(\vect{z})=f+g \in \MPLs_{d-1}^{w}(\vect{z})$.
\end{proof}
Our algorithmic proof can be used to compute explicit representations of $\PLi_{\vect{n}}(\vect{z})$ as an element of $\MPLs_{\abs{\vect{n}}}^{d-1}(\vect{z})$ like \eqref{eq:PLi_1,n}. In sections~\ref{sec:depth-2} and \ref{sec:depth-3} we will demonstrate this by deriving closed formulas in depths two and three.
\begin{remark}
	An alternative interpretation of MPL are iterated integrals in several variables, $\Li_{\vect{n}}(\vect{z}) = \int_{\vect{0}}^{\vect{z}} w$, where $w$ is a linear combination of words in the differential forms $\dd \log (z_i)$ and $\dd \log(1-z_{i,j})$ \cite{BognerBrown:GenusZero,Zhao:MPLcontinuation}.
	The depth equals the maximum number of forms of the second type that appear in a single word.
	Hence the pullback
\begin{equation*}
	\Inversion^{\ast} \dd \log (z_i)
	= - \dd \log(z_i)
	,\quad
	\Inversion^{\ast} \dd \log(1-z_{i,j})
	= \dd \log(1-z_{i,j}) - \sum_{\mathclap{i\leq k \leq j}}\dd \log(z_k)
\end{equation*}
	under $\Inversion(\vect{z}) \defas 1/\vect{z}$ maps $w$ to $(-1)^{\abs{\vect{n}}_0} w$, up to terms of lower depth. Thus $\PLi_{\vect{n}}(\vect{z})$ reduces to lower depth MPL, because
$
	\Li_{\vect{n}}(1/\vect{z}) 
	= \int_0^{1/\vect{z}} w 
	= \int_{\vect{\infty}}^{\vect{z}} \Inversion^{\ast}(w)
$
equals $\int_{\vect{0}}^{\vect{z}} \Inversion^{\ast}(w)$ plus products of lower depth MPL with MZV, according to the path-concatenation formula \cite{Chen:AlgebrasOfII}.
	However, we do not want to contaminate theorem~\ref{thm:generalized-parity-MPLs} with such MZV, which requires our careful analysis of limits (see remark~\ref{rem:order-of-limits}).
\end{remark}
Some comments are in order for arguments that are roots of unity. In general we cannot simply substitute such values into the functions \eqref{eq:def-MPLs}, because some factors $\Li_{\vect{n}^{(i)}}(\vect{z}^{(i)})$ might be singular. For example, in \eqref{eq:Li_1,2(z1,z2)} the term $\Li_{1}(z_1) = -\log(1-z_1)$ is not defined at $z_1 = 1$.
Therefore we must approach such arguments as limits from inside the domain \eqref{eq:domain}, where theorems \ref{thm:generalized-parity} and \ref{thm:generalized-parity-MPLs} are valid. It is well-known that divergences only occur when $(n_d,z_d)=(1,1)$, for example see \cite[Lemma~3.3.16]{Panzer:PhD}:
\begin{lemma}
	\label{lem:convergent-limit}%
	Let $\vect{n}\in\N^d$ and $\abs{z_1}=\cdots=\abs{z_{d}}=1$ with $(n_d,z_d)\neq(1,1)$.
	Then the iterated integral \eqref{eq:MPL-as-Hlog} for $\Li_{\vect{n}}(\vect{z})$ converges absolutely and equals the limit
	\begin{equation*}
		\Li_{\vect{n}}(\vect{z})
		=
		\lim_{\vect{y}\rightarrow \vect{z}} \Li_{\vect{n}}(\vect{y}),
	\end{equation*}
	taken from $\vect{y}$ within the domain \eqref{eq:domain} in an arbitrary way.
\end{lemma}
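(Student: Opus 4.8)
The plan is to read off both assertions directly from the iterated-integral representation. Substituting $\vect{z}\mapsto 1/\vect{z}$ in \eqref{eq:MPL-as-Hlog} (which is harmless since $\abs{z_i}=1$) writes $\Li_{\vect{n}}(\vect{z}) = (-1)^d\int_0^1 \letter{\sigma_r}\cdots\letter{\sigma_1}$ as a word in the forms $\letter{\sigma}=\dd t/(t-\sigma)$ whose singularities $\sigma$ are either $0$ or of the shape $1/z_{i,d}$. Because $\abs{z_i}=1$, every such $\sigma\neq 0$ lies on the unit circle, so it meets the straight path $[0,1]$ only at its endpoint $t=1$ (namely exactly when the product $z_{i,d}$ equals $1$), while the singularity $\sigma=0$ sits at the other endpoint $t=0$. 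Hence the integrand has no singularity in the open interval $(0,1)$, and the whole analysis reduces to the two endpoints.

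First I would settle absolute convergence. Reading off the nested definition of hyperlogarithms, the innermost letter is $\letter{1/z_{1,d}}$ with $1/z_{1,d}\neq 0$, so the integral is manifestly integrable at $t=0$, the inner $\letter{0}$-factors contributing only integrable powers of $\log t$ there. At the upper endpoint the outermost letter is $\letter{0}$ when $n_d>1$ and $\letter{1/z_d}$ when $n_d=1$; it is singular at $t=1$ precisely when $n_d=1$ and $1/z_d=1$, i.e. when $(n_d,z_d)=(1,1)$. A short induction on the number of inner letters with $\sigma=1$ shows that, as long as the outermost letter is regular at $t=1$, each such inner letter contributes only an integrable power of $\log\abs{1-t}$; thus $(n_d,z_d)\neq(1,1)$ is exactly the condition for absolute convergence.

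Next I would identify the absolutely convergent integral with the boundary value. For $\vect{y}$ inside the domain \eqref{eq:domain} the products $y_{i,d}\notin[0,\infty)$, so every moving singularity $1/y_{i,d}$ stays off $[0,1]$ and $\Li_{\vect{n}}(\vect{y})$ equals the integral along $[0,1]$. As $\vect{y}\to\vect{z}$ the integrand converges pointwise on $(0,1)$ to that of $\Li_{\vect{n}}(\vect{z})$, and the estimate from the convergence step furnishes an integrable majorant (the uniformity being the delicate point addressed below); dominated convergence then yields $\lim_{\vect{y}\to\vect{z}}\Li_{\vect{n}}(\vect{y})=(-1)^d\int_0^1\letter{\sigma_r}\cdots\letter{\sigma_1}$. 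Since each moving singularity $1/y_{i,d}\to 1$ can only pinch the contour against the fixed endpoint $t=1$, and never against an interior point or against a second singularity, the limit is the same from every direction of approach.

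The main obstacle is the uniformity of this majorant when the approach $\vect{y}\to\vect{z}$ is \emph{tangential}, so that some $1/y_{i,d}$ tends to $1$ almost along the real axis. The naive bound $1/\abs{t-1/y_{i,d}}\le 1/\abs{\Imaginaerteil(1/y_{i,d})}$ blows up there and must be avoided. I would instead estimate level by level, using that $\int_0^1\abs{\log\abs{t-\sigma}}\,\dd t$ stays bounded as $\sigma\to 1$, so that each inner letter with $\sigma\to1$ contributes a uniformly $L^1$-bounded logarithm, while regularity of the outermost letter keeps the final integration free of a $1/(t-1)$ pole; equivalently one may invoke the continuity of shuffle-regularized hyperlogarithms in their singular parameters \cite{Panzer:PhD}. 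Either way, the absolute convergence established above is precisely what powers the dominated-convergence argument, and the excluded case $(n_d,z_d)=(1,1)$ is the only genuine obstruction.
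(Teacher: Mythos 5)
A point of order first: the paper does not actually prove this lemma --- it is quoted as well known, with a pointer to \cite[Lemma~3.3.16]{Panzer:PhD} --- so you are attempting strictly more than the text does, and your proposal has to be judged on its own. Its first half is sound: after substituting $\vect{z}\mapsto 1/\vect{z}$ in \eqref{eq:MPL-as-Hlog} all letters $\letter{1/z_{i,d}}$ lie on the unit circle, which meets $[0,1]$ only at $t=1$; the innermost letter is nonzero, the outermost letter is singular at $t=1$ exactly when $(n_d,z_d)=(1,1)$, and inner letters equal to $\letter{1}$ only produce integrable powers of $\log\abs{1-t}$. That correctly establishes absolute convergence of the fixed integral, with the excluded case identified for the right reason.

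The gap sits in the limit statement, at precisely the point you flag and then do not actually resolve. Dominated convergence requires a \emph{pointwise} integrable majorant that is uniform along an arbitrary --- in particular tangential --- approach $\vect{y}\to\vect{z}$, and ``uniformly $L^1$-bounded'' inner logarithms do not supply one: an $L^1$ bound on an inner level does not dominate pointwise, and the absolute-value induction breaks at the next level, because for a moving letter $\sigma'=1/y_{j,d}\to 1$ one has $\int_0^1 (1+\abs{\log(1-s)})^k\,\abs{s-\sigma'}^{-1}\,\mathrm{d}s \asymp \log\bigl(1/\abs{\Imaginaerteil \sigma'}\bigr)$, which is unbounded when the approach is tangential. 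This configuration is unavoidable, since several letters can collide at $1$ simultaneously (already $\vect{n}=(1,2)$, $\vect{z}=(1,1)$ gives the word $\letter{0}\letter{1}\letter{1}$). A correct argument must exploit cancellation rather than absolute values --- e.g.\ partial fractions $\bigl[(s-\sigma)(s-\sigma')\bigr]^{-1}=(\sigma-\sigma')^{-1}\bigl[(s-\sigma)^{-1}-(s-\sigma')^{-1}\bigr]$ with the compensating difference of logarithms, uniform $L^p$ bounds with $p>1$ combined with Vitali's theorem in place of dominated convergence, or genuine uniform pointwise logarithmic bounds proved on the antiderivatives; your sketch does none of these. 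Your fallback, ``invoke the continuity of shuffle-regularized hyperlogarithms in their singular parameters \cite{Panzer:PhD}'', is essentially the statement being proven --- it is the very thesis lemma the paper cites for this result --- so it is circular as a step in a self-contained proof. (For non-tangential approaches, where $\abs{t-\sigma'}\gtrsim\abs{t-1}$, your dominated-convergence plan does go through on the simplex; the tangential case is the entire remaining content of the lemma, since it asserts the limit ``in an arbitrary way''.)
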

To obtain \eqref{eq:parity-Nth-roots} for given $\vect{n}\in\N^d$ and $\vect{z} \in \RU{N}^d$ with $(n_d,z_d) \neq (1,1)$, we write $\PLi_{\vect{n}}(\vect{y})$ with depth $d-1$ using theorem~\ref{thm:generalized-parity-MPLs} and consider the convergent limits in the order
\begin{equation}
	\PLi_{\vect{n}}(\vect{z})
	= \lim_{y_d\rightarrow z_d} \cdots \lim_{y_1\rightarrow z_1} \PLi_{\vect{n}}(\vect{y}).
	\label{eq:PLi-limit}%
\end{equation}
So in each term $\Li_{\vect{n}^{(1)}}(\vect{y}^{(1)}) \cdots \Li_{\vect{n}^{(s)}}(\vect{y}^{(s)})$ in \eqref{eq:def-MPLs} we compute the limits of the arguments from left to right. Since the arguments are consecutive products, their limiting values are in $\RU{N}$.
By lemma~\ref{lem:convergent-limit} the limits are trivial unless the last argument of one of the factors $\Li_{\vect{n}^{(i)}}(\vect{y}^{(i)})$ approaches unity.
For this case we invoke the standard expansion in logarithms, see \cite{Brown:MZVPeriodsModuliSpaces,Panzer:PhD,IharaKanekoZagier:DerivationDoubleShuffle}.
\begin{theorem}
	\label{thm:anareg}%
	Given $\vect{m} \in \N^s$ and fixed arguments $y_1,\ldots,y_{s-1}$, there exist unique functions $f_0(y_s),\ldots,f_s(y_s)$ that are analytic at $y_s=1$ and fulfil
	\begin{equation}
		\Li_{\vect{m}}(\vect{y})
		= \sum_{k=0}^{s} \log^k(1-y_s) f_k(y_s)
		.
		\label{eq:anareg}%
	\end{equation}
\end{theorem}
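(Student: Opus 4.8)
The plan is to prove \emph{uniqueness} by a monodromy argument and \emph{existence} by induction on the weight $\abs{\vect{m}}$, integrating the differential equation of $\Li_{\vect{m}}(\vect{y})$ in its last variable $y_s$. The powers $\log^k(1-y_s)$ are the only transcendental building blocks, and the whole point is that each differentiation or integration step keeps their number bounded by the depth.

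For uniqueness, suppose $\sum_{k=0}^s \log^k(1-y_s)\,g_k(y_s)=0$ with all $g_k$ analytic at $y_s=1$, not all zero. Analytic continuation once around the branch point $y_s=1$ sends $\log(1-y_s)\mapsto \log(1-y_s)+2\pi\imag$ while fixing the single-valued germs $g_k$. Hence the operator $N\defas M-\mathrm{id}$ (with $M$ the monodromy) lowers the degree in $\log(1-y_s)$ by one, and $N^s$ applied to the left-hand side extracts $s!\,(2\pi\imag)^s\,g_s\equiv 0$, so $g_s\equiv 0$; descending induction on the degree gives $g_k\equiv 0$ for all $k$. This forces the $f_k$ in \eqref{eq:anareg} to be unique and is the easy half.

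For existence I argue by induction on $\abs{\vect{m}}$. The base case $s=1$ is the classical polylogarithm, which has a single logarithmic branch point at $y_1=1$ (explicitly $\Li_1(y)=-\log(1-y)$, and the reflection formula handles higher weights), so $\Li_{m_1}(y_1)=f_0(y_1)+\log(1-y_1)f_1(y_1)$ with degree $1\le s$. For the step I fix $y_1,\ldots,y_{s-1}$ and differentiate in $y_s$, using the two differentials obtained directly from \eqref{eq:def-MPL}: for $m_s\ge 2$,
\begin{equation*}
	\partial_{y_s}\Li_{m_1,\ldots,m_s}(\vect{y})=\frac{1}{y_s}\,\Li_{m_1,\ldots,m_{s-1},m_s-1}(\vect{y}),
\end{equation*}
while for $m_s=1$,
\begin{equation*}
	\partial_{y_s}\Li_{m_1,\ldots,m_{s-1},1}(\vect{y})=\frac{1}{1-y_s}\,\Li_{m_1,\ldots,m_{s-1}}(y_1,\ldots,y_{s-2},y_{s-1}y_s).
\end{equation*}
The key integration sublemma is elementary: if $h$ is analytic at $y_s=1$, then $\int \log^k(1-y_s)\,h(y_s)\,\dd y_s$ is again of the form $\sum_{j=0}^k\log^j(1-y_s)\,p_j(y_s)$ with $p_j$ analytic (the degree does not grow, since no factor $1/(1-y_s)$ is present), whereas $\int \frac{\log^k(1-y_s)}{1-y_s}\,h(y_s)\,\dd y_s$ raises the degree to $k+1$ because $\frac{\log^k(1-y_s)}{1-y_s}=-\tfrac{1}{k+1}\partial_{y_s}\log^{k+1}(1-y_s)$. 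In the case $m_s\ge 2$ the factor $\Li_{m_1,\ldots,m_{s-1},m_s-1}$ has the same depth $s$, so by induction it lies in the asserted class of degree $\le s$; multiplying by the analytic $1/y_s$ and integrating preserves the degree $\le s$. Finally, if $\phi$ denotes the antiderivative so produced, then $\Li_{\vect{m}}-\phi$ has vanishing $y_s$-derivative, hence is constant in $y_s$ and thus analytic at $y_s=1$; it is absorbed into $f_0$, so no explicit evaluation of the integration constant is needed.

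The main obstacle is the case $m_s=1$, where the recursion replaces the singular coordinate $y_s$ by the product $y_{s-1}y_s$: the inductive hypothesis applied to the depth-$(s-1)$ factor yields an expansion in $\log(1-y_{s-1}y_s)$, which must be re-expanded around $y_s=1$. Here one distinguishes two regimes. If $y_{s-1}=1$ then $\log(1-y_{s-1}y_s)=\log(1-y_s)$ and the induction feeds degree $\le s-1$ into the factor $1/(1-y_s)$, whose integration raises the degree to $\le s$ — this is exactly where the bound is saturated (as in $\Li_{1,\ldots,1}(1,\ldots,1,y_s)=\tfrac{(-1)^s}{s!}\log^s(1-y_s)$). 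If $y_{s-1}\ne 1$ then $1-y_{s-1}y_s$ stays away from $0$ near $y_s=1$, so the factor is analytic there and contributes only to the analytic coefficients, giving degree $\le 1$. What requires care is confirming this analyticity: one must rule out the other coincidences $1-y_j\cdots y_{s-1}y_s=0$ for $j<s-1$, which is precisely what the domain \eqref{eq:domain} guarantees by excluding all consecutive products from $[0,\infty)$. Once these cases are combined, the degree bound $\le s$ holds uniformly and the induction closes.
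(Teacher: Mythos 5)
Your uniqueness argument via the monodromy operator $N = M - \mathrm{id}$ is correct, as are the integration sublemma, the base case, and the step $m_s \geq 2$; note that the paper itself does not prove theorem~\ref{thm:anareg} at all but quotes it as a standard result with references, so your attempt is measured against the statement, not a printed proof. The genuine gap is in the case $m_s = 1$: your dichotomy ``$y_{s-1}=1$ versus $y_{s-1}\neq 1$'' does not exhaust the singular configurations. The inner factor $\Li_{m_1,\ldots,m_{s-1}}(y_1,\ldots,y_{s-2},y_{s-1}y_s)$ is singular at $y_s=1$ whenever \emph{any} consecutive product $y_j\cdots y_{s-1}$ equals $1$ for some $j\leq s-2$, not only when $y_{s-1}=1$. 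Concretely, take $s=3$, $\vect{m}=(1,1,1)$ and $y_1=y_2=-1$: then $y_2\neq 1$, but
\begin{equation*}
	\partial_{y_3} \Li_{1,1}(-1,-y_3)
	= -\frac{\Li_1(y_3)}{1+y_3}
	= \frac{\log(1-y_3)}{1+y_3},
\end{equation*}
so $\Li_{1,1}(-1,-y_3)$ has a logarithmic branch point at $y_3=1$, and your claim that in this regime the factor ``is analytic there and contributes only to the analytic coefficients, giving degree $\le 1$'' is false. Your appeal to the domain \eqref{eq:domain} cannot repair this: that domain is a joint condition on all variables, whereas the theorem fixes $y_1,\ldots,y_{s-1}$, and in the paper's application (lemma~\ref{lem:z_d->1}) these are \emph{arbitrary} roots of unity, where coincidences $y_j\cdots y_{s-1}=1$ genuinely occur — these are exactly the cases the parity theorem at roots of unity must cover, so they cannot be excluded. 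The expansion \eqref{eq:anareg} is still true there (in the example one finds $\Li_{1,1}(-1,-y_3) = a(y_3)+b(y_3)\log(1-y_3)$ with $a,b$ analytic and $b(1)=0$), but your induction does not establish it: the inductive hypothesis only controls a depth-$(s-1)$ MPL as its \emph{last argument tends to $1$}, whereas here you need its local behaviour at $y_{s-1}y_s = y_{s-1} \neq 1$, a singular point produced by a longer consecutive product.

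The natural repair is to strengthen the induction so that it closes: prove, by the same induction on the weight, that for fixed $y_1,\ldots,y_{s-1}$ and every point $c\neq 0$ the germ of $\Li_{\vect{m}}(\vect{y})$ at $y_s=c$ has the form $\sum_{k} \log^k(1-y_s/c)\, h_k(y_s)$ with $h_k$ analytic at $c$ and degree bounded by $\abs{J(c)}$, where $J(c)\defas\setexp{j\leq s}{y_j\cdots y_{s-1}\,c=1}$ (so $\abs{J(c)}\leq s$, and $c=1$ recovers \eqref{eq:anareg}). Then both cases of $m_s=1$ go through uniformly: the inner factor carries degree at most $\abs{J(c)\setminus\set{s}}$ because $\log(1-y_{s-1}y_s/y_{s-1}c)=\log(1-y_s/c)$, and the integration against $\dd{}y_s/(1-y_s)$ raises the degree by one exactly when $c=1$, i.e.\ when $s\in J(c)$ — your own bookkeeping then closes the induction, including the degenerate configurations above. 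Alternatively one can work with the iterated-integral representation \eqref{eq:MPL-as-Hlog} and the shuffle identity \eqref{eq:shuffle-reg}, as the paper does in the proof of lemma~\ref{lem:z_d->1}: there a coincidence $y_j\cdots y_{s-1}=1$ makes the letter $\letter{1/y_{j,s}}$ \emph{literally equal} to $\letter{1/y_s}$, so the problematic letters are accounted for by the word structure rather than by a case distinction on $y_{s-1}$ alone — though note that lemma~\ref{lem:z_d->1} as written presupposes theorem~\ref{thm:anareg}, so a self-contained proof still needs the strengthened local statement (or the references the paper cites).
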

The important consequence is that the limit $y_s\rightarrow 1$ is finite if and only if $f_k(1) = 0$ for all $k>0$, and in this case, it equals $\lim_{y_s\rightarrow 1} \Li_{\vect{m}}(\vect{y}) = f_0(1)$.
Since the limits in \eqref{eq:PLi-limit} are convergent, all divergences of the individual terms \eqref{eq:def-MPLs} must cancel each other. For example $\PLi_{1,2}(\vect{y})$ in \eqref{eq:Li_1,2(z1,z2)}, the terms
\begin{equation*}
	\Li_1(y_1) \log^2(-y_1 y_2)
	\quad\text{and}\quad
	-\Li_1(y_1) \log^2(-y_2)
\end{equation*}
with $\Li_1(y_1) = - \log(1-y_1)$ are individually divergent as $y_1\rightarrow 1$ but cancel each other. This cancellation of all divergences implies that we are allowed to just replace each individual $\Li_{\vect{m}}(\vect{y})$ by its regularized limit $f_0(1)$, which can be computed in various ways.
For convenience of the reader we recall here the well-known shuffle regularization \cite{Panzer:PhD,IharaKanekoZagier:DerivationDoubleShuffle}.
The alternative approach via quasi-shuffles \eqref{eq:quasi-shuffle} will be used in section~\ref{sec:depth-3} to deduce \eqref{eq:MZV-depth-3} from \eqref{eq:PLi-depth-3}.
\begin{lemma}
	\label{lem:z_d->1}%
	Let $\vect{m}\in\N^s$ and suppose $y_1,\ldots,y_{s-1} \in \RU{N}$ are given $N$-th roots of unity. Then there exist $x_k \in \MZV^{s-k}_{\abs{\vect{m}}-k}(\RU{N})$ for $0\leq k \leq s$ such that
	\begin{equation}
		\lim_{y_s \rightarrow 1} \left[ 
			\Li_{\vect{m}}(\vect{y})
			- \sum_{k=0}^s x_k \log^k(1-y_s)
		\right]
		= 0
		.
		\label{eq:z_d->1}%
	\end{equation}
\end{lemma}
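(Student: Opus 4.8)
The plan is to combine Theorem~\ref{thm:anareg}, which supplies the structural expansion, with the classical shuffle regularization of iterated integrals, which identifies the coefficients. First I set $x_k \defas f_k(1)$, where $f_0,\ldots,f_s$ are the analytic functions from \eqref{eq:anareg}. With this choice the bracket in \eqref{eq:z_d->1} equals $\sum_{k}\log^k(1-y_s)\bigl[f_k(y_s)-f_k(1)\bigr]$, and since each $f_k$ is analytic at $y_s=1$ we have $f_k(y_s)-f_k(1)=\bigo{1-y_s}$, so every summand behaves like $\log^k(1-y_s)\cdot\bigo{1-y_s}\to 0$. Thus the limit statement is immediate, and the entire content of the lemma is the membership $x_k \in \MZV_{\abs{\vect{m}}-k}^{s-k}(\RU{N})$.

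To prove that membership I regard $\Li_{\vect{m}}(\vect{y})$ as a function of its last argument $x\defas y_s$, the others being the fixed roots of unity $y_1,\ldots,y_{s-1}$. Unwinding the series differentials (the operator $x\partial_x$ lowers $m_s$ when $m_s>1$, whereas for $m_s=1$ one finds $\partial_x\Li_{(\vect{m}',1)}(\vect{y})=\tfrac{1}{1-x}\Li_{\vect{m}'}(y_1,\ldots,y_{s-2},y_{s-1}x)$, the last argument collapsing the final variable) represents $\Li_{\vect{m}}(\vect{y})$ as a hyperlogarithm $\int_0^x W$ in the single variable $x$. All letters of the word $W$ sit at $0$ or at reciprocals $1/(y_i\cdots y_{s-1})$ of partial products of the $y_i$, hence in $\set{0}\cup\RU{N}$; its weight equals the number of letters, $\abs{\vect{m}}$, and its depth equals the number of non-$\letter{0}$ letters, $s$. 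The only source of the logarithmic singularity as $x\to 1$ are the letters $\letter{1}$: those coming from indices $i$ with $y_i\cdots y_{s-1}=1$, together with the outermost one produced when $m_s=1$.

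Now I invoke shuffle regularization. Because iterated integrals are multiplicative for the shuffle product \eqref{eq:shuffle-product} and the shuffle algebra is free, $W$ decomposes uniquely as $W=\sum_{j\ge 0}\letter{1}^{\shuffle j}\shuffle W^{(j)}$ with each $W^{(j)}$ convergent at the upper endpoint $x=1$; regularizing the lower endpoint $0$ in the same way (using $\int_0^{1}\letter{0}=0$) removes any trailing $\letter{0}$. This yields $\int_0^x W=\sum_j \log^j(1-x)\int_0^x W^{(j)}$, where each $Z_j\defas\int_0^1 W^{(j)}$ is a genuine convergent multiple polylogarithm at roots of unity. Since the shuffle preserves the multiset of letters, pulling out $j$ copies of $\letter{1}$ leaves $W^{(j)}$ with exactly $\abs{\vect{m}}-j$ letters and $s-j$ non-$\letter{0}$ letters, so that $Z_j\in\MZV_{\abs{\vect{m}}-j}^{s-j}(\RU{N})$. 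Finally, both $\sum_k f_k(1)\log^k(1-x)$ and $\sum_j Z_j\log^j(1-x)$ differ from $\Li_{\vect{m}}(\vect{y})$ by a quantity tending to $0$ as $x\to1$, so their difference is a polynomial in $\log(1-x)$ that vanishes in the limit; as the powers $\log^k(1-x)$ are asymptotically independent, this forces $x_k=f_k(1)=Z_k$ for every $k$, which completes the proof.

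The step I expect to be the main obstacle is making the single-variable hyperlogarithm representation and its \emph{depth} bookkeeping rigorous. One must verify that the recursion really produces letters only in $\set{0}\cup\RU{N}$, that the identity ``depth $=$ number of non-$\letter{0}$ letters'' is preserved under the shuffle decomposition (so that each extracted $\letter{1}$ costs exactly one unit of depth and one of weight), and that the two-sided regularization at $0$ and $1$ genuinely lands inside the convergent colored MZVs of the asserted weight and depth rather than in some larger regularized algebra or picking up spurious products. By contrast, matching the shuffle coefficients $Z_k$ with the analytic coefficients $f_k(1)$ is a soft asymptotic-independence argument, but it relies essentially on Theorem~\ref{thm:anareg} to guarantee that an honest $\log$-polynomial approximation with analytic remainder exists in the first place.
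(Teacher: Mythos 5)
Your proposal is correct and takes essentially the same route as the paper: the paper likewise passes to the hyperlogarithm representation \eqref{eq:MPL-as-Hlog}, extracts the leading run of letters $\letter{1/y_s}$ via the shuffle identity \eqref{eq:shuffle-reg} (your decomposition $W=\sum_j \letter{1}^{\shuffle j}\shuffle W^{(j)}$ is the same device in abstract form), checks that the remaining integrals converge as $y_s\rightarrow 1$ by lemma~\ref{lem:convergent-limit} with exactly your weight/depth bookkeeping, and identifies the coefficients using theorem~\ref{thm:anareg}. The differences are cosmetic only: the paper keeps the $y_s$-dependent letters with integrals over $[0,1]$ rather than rescaling to $\int_0^x$ with constant letters, and it splits off $m_s\geq 2$ as the trivial case instead of treating all cases uniformly.
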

\begin{proof}
	If $m_s\geq 2$, the limit is convergent by lemma~\ref{lem:convergent-limit} and we can set $x_k=0$ for all $k>0$ and $x_0=\restrict{\Li_{\vect{m}}(\vect{y})}{y_s=1}$. So let us assume $m_s=1$ and consider the iterated integral representation \eqref{eq:MPL-as-Hlog},
	\begin{equation*}
		(-1)^s \Li_{\vect{m}}(\vect{y})
		= \int_0^1 v
		\quad\text{where}\quad
		v = \letter{1/y_s} \letter{0}^{m_{s-1}-1} \letter{1/y_{s-1,s}} \!\cdots \letter{0}^{m_1-1} \letter{1/y_{1,s}}
		.
	\end{equation*}
	Let $1\leq r \leq s$ denote the number of consecutive letters $\letter{1/y_s}$ at the beginning of the word $v$. Equivaletly, $r$ is the largest number such that $m_{i}=1$ for all $i>s-r$ and $y_i = 1$ for all $s>i>s-r$.
	The case $r=s$ corresponds to $x_s=(-1)^s/s!$ and $x_k=0$ for all $k<s$ via the shuffle product \eqref{eq:shuffle-product}, because
	\begin{equation*}
		\int_0^1 \letter{1/y_s}^s
		= \frac{1}{s!} \int_0^1 \left( \letter{1/y_s} \right)^{\shuffle s}
		= \frac{1}{s!} \left( \int_0^1\letter{1/y_s} \right)^s
		= \frac{\log^s (1-y_s)}{s!}
		.
	\end{equation*}
	Otherwise, $r<s$ such that the word $v$ has the form
	\begin{equation*}
		v = \letter{1/y_s}^r \letter{\tau} u
	\end{equation*}
	for some letter $\tau \neq 1/y_s$ and a word $u$. We now apply \eqref{eq:shuffle-reg} with all words reversed and $\sigma_1=\cdots=\sigma_r=1/y_s$. With the multiplicativity of iterated integrals over the shuffle product \eqref{eq:shuffle-product} this gives
	\begin{equation*}
		(-1)^{s} \Li_{\vect{m}}(\vect{y})
		= \sum_{k=0}^r (-1)^{r-k} \frac{\log^{k} (1-y_s)}{k!}
		\int_0^1 \letter{\tau} \left[ u \shuffle \letter{1/y_s}^{r-k} \right]
		.
	\end{equation*}
	After expanding the shuffle product, the iterated integrals multiplying the powers of $\log(1-y_s)$ can be rewritten via \eqref{eq:MPL-as-Hlog} as linear combinations of $\Li_{\vect{\tilde{m}}}(\vect{\tilde{y}})$ where $\vect{\tilde{m}} \in \N^{s-k}$. By lemma~\ref{lem:convergent-limit} these remain finite as $y_s\rightarrow 1$, because either $\tau=0$ such that $\tilde{m}_{s-k} \geq 2$ or $\tilde{y}_{s-k}=1/\tau=y_{s-r} y_s \rightarrow y_{s-r} \neq 1$.
	Knowing \eqref{eq:anareg}, we can therefore set
	\begin{equation*}
		x_k = \frac{(-1)^{s+r-k}}{k!}
		\int_0^1 \left[
			\letter{\tau} \left( u \shuffle \letter{1}^{r-k} \right)
		\right]_{y_s=1}
		\in \MZV^{s-k}_{\abs{\vect{m}}-k}(\RU{N})
		.\qedhere
		\label{eq:shuffle-reg-explicit}%
	\end{equation*}
\end{proof}
\begin{example}
	Consider $\Li_{2,1}(-1,z)=\int_0^1 \letter{1/z}\letter{0}\letter{-1/z}$ as $z\rightarrow 1$. In this case $s=2$, $r=1$ and
	$\letter{1/z}\letter{0}\letter{-1/z} = \letter{1/z} \shuffle \letter{0}\letter{-1/z} - \letter{0} \left( \letter{-1/z} \shuffle \letter{1/z} \right)$
	such that
	\begin{align*}
		\Li_{2,1}(-1,z)
		&= \log(1-z) \int_0^1 \letter{0}\letter{-1/z}
		-\int_0^1 \letter{0} \left( \letter{-1/z}\letter{1/z} + \letter{1/z} \letter{-1/z} \right)
		\\
		&= -\log(1-z) \Li_2(-z)
		- \Li_{1,2}(-1, -z) - \Li_{1,2}(-1, z)
		.
	\end{align*}
	From this we read off the regularized limit $x_0 = -\Li_{1,2}(-1,-1) - \Li_{1,2}(-1,1)$.
\end{example}
This gives an algorithm to evaluate \eqref{eq:PLi-limit} explicitly as an element of $\MZV_{\abs{\vect{n}}}^{d-1}(\RU{N})$: Just substitute the roots of unity $\vect{z}$ into each term of the form \eqref{eq:def-MPLs} and replace each divergent factor $\Li_{\vect{n}^{(i)}}(\vect{z}^{(i)})$ with the corresponding regularization $x_0$ from lemma~\ref{lem:z_d->1}.
\begin{remark}\label{rem:Nth-roots-integer}
	For $\vect{z} \in \RU{N}^d$, the Bernoulli polynomials $\ber{k}{z_{i,d}}$ are also evaluated at $N$-th roots of unity and evaluate to a rational multiple of $ (\imag\pi)^k$.
	Via \eqref{eq:polylog-inversion} they give the values $-2\Realteil \Li_{k}(z_{i,d})$ for even weight $k$ and $-2\imag\Imaginaerteil \Li_{k}(z_{i,d})$ in odd weight.
	With these taking the role of $2\mzv{2k}$ in \eqref{eq:parity-MZV-integer}, theorem~\ref{thm:generalized-parity-MPLs} specializes to an integer coefficient parity theorem for special values of MPL at $N$-th roots of unity.
\end{remark}

\section{Depth two}
\label{sec:depth-2}

The following formula, valid for arbitrary $d,r\in \N$ and $k \in \N_0, \vect{n} \in \N^d$, will be very useful in the sequel to compute iterated integrals $\int \dd z_1/z_1$:
\begin{equation}\label{eq:iterated-primitive}
	\ber{k}{z_{1,d}} \Li_{\vect{n}}(\vect{z})
	=\left(z_1 \partial_{z_1} \right)^r
	\sum_{\mu=0}^k
		\binom{-r}{\mu}
		\ber{k-\mu}{z_{1,d}} \Li_{\vect{n}+(r+\mu)\vect{e}_1}(\vect{z})
	.
\end{equation}
It follows from \eqref{eq:diff-MPL} and $z \partial_{z} \ber{k+1}{z} = \ber{k}{z}$ because $(z_1 \partial_{z_1})^r$ maps the summand to $\sum_{s=\mu}^{k} \binom{r}{s-\mu} \ber{k-s}{z_{1,d}} \Li_{\vect{n}+s\vect{e}_1}(\vect{z})$ and $\sum_{\mu} \binom{-r}{\mu} \binom{r}{s-\mu} = \delta_{s,0}$ where
\begin{equation*}
	\binom{-r}{\mu}
	= \frac{(-r)(-r-1)\cdots(-r-\mu+1)}{\mu!}
	= (-1)^{\mu} \binom{r+\mu-1}{\mu}
	.
\end{equation*}
We can use \eqref{eq:iterated-primitive} to integrate \eqref{eq:PLi_1,n} and obtain an explicit formula for $\PLi_{\vect{n}}(\vect{z})$ for arbitrary values of the indices $\vect{n} \in \N^2$. For a start, \eqref{eq:iterated-primitive} shows that
\begin{equation*}\begin{split}
	G
	&\defas \sum_{\mu=0}^{n_2} (-1)^{\mu} \sum_{\nu=0}^{n_2-\mu} 
		\binom{1-n_1}{\nu}
		\ber{n_2-\mu-\nu}{z_1 z_2}
		\Li_{n_1+\mu+\nu}(z_1)
	- \Li_{n_1}(z_1) \ber{n_2}{z_2}
	\\&\quad
	- \Li_{n_1+n_2}(z_1 z_2)
	+ (-1)^{n_2} \big[
		n_2 \Li_{n_2+1}(1/z_2) \ber{n_1-1}{z_1 z_2}
		+ \Li_{n_2}(1/z_2) \ber{n_1}{z_1 z_2}
	\big]
\end{split}\end{equation*}
fulfils $(z_1 \partial_{z_1})^{n_1 -1} G = \PLi_{1,n_2}(\vect{z}) = (z_1 \partial_{z_1})^{n_1-1} \PLi_{\vect{n}}(\vect{z})$ according to \eqref{eq:diff-PLi}.
Thus, as a function of $z_1$, the difference between $G$ and $\PLi_{\vect{n}}(\vect{z})$ must be a polynomial in $\log(-z_1 z_2)$ and can therefore be recovered from \eqref{eq:reglim} via
$\PLi_{\vect{n}}(\vect{z}) \sim -(-1)^{n_1+n_2} \Li_{\vect{n}}(1/\vect{z})$:
\begin{equation*}
	\PLi_{\vect{n}}(\vect{z})
	\sim
	(-1)^{n_2} \sum_{k=0}^{n_1} \ber{n_1-k}{z_1 z_2} \Li_{n_2+k}(1/z_2) \binom{n_2-1+k}{k}
	.
\end{equation*}
Note that the first summands ($k=0,1$) reproduce the terms of $G$ that are purely logarithmic in $z_1$. Adding the remaining terms to $G$, we arrive at
\begin{align}\label{eq:PLi-depth-2}
	\PLi_{\vect{n}}(\vect{z})
	&=
	\Li_{\vect{n}}(\vect{z})-(-1)^{\abs{\vect{n}}} \Li_{\vect{n}}(1/\vect{z})
	\\
	&=
	(-1)^{n_1}\sum_{\mu=n_1}^{\abs{\vect{n}}}
	\binom{\mu-1}{n_1-1}\Li_{\mu}(z_1) \ber{\abs{\vect{n}}-\mu}{z_1 z_2} (-1)^{\mu}
	- \Li_{\abs{\vect{n}}}(z_1 z_2) 
	\nonumber\\&
	+(-1)^{n_2}\sum_{\mu=n_2}^{\abs{\vect{n}}}
	\binom{\mu-1}{n_2-1} \Li_{\mu}(\tfrac{1}{z_2}) \ber{\abs{\vect{n}}-\mu}{z_1 z_2}
	- \Li_{n_1}(z_1) \ber{n_2}{z_2}
	.\nonumber
\end{align}
In the special case when $\abs{z_1}=\abs{z_2}=1$, this formula was obtained in \cite[Proposition~1.2]{Nakamura:SimpleLerchTornheim}.%
\footnote{%
	Mind the misprint in \cite[equation~(1.3)]{Nakamura:SimpleLerchTornheim}: The term $\zeta(a;-y) \zeta(b,x)$ must read $\zeta(a;x) \zeta(b,-y)$.
}
We will now discuss some specializations to roots of unity which had been discovered before.

In the MZV case ($z_1,z_2\rightarrow1$ and $n_2\geq 2$) with odd weight $w=n_1+n_2$, Bernoulli polynomials with odd index vanish\footnote{Except for $\ber{1}{z_1 z_2}$, but this one cancels between the two sums.} and the even ones become Riemann zeta values \eqref{eq:even-mzv}. Thus \eqref{eq:PLi-depth-2} becomes the well-known \cite{BorweinBorweinGirgensohn:ExplicitEuler,Zagier:232,HuardWilliamsZhang:Tornheim}
\begin{equation}\label{eq:MZV-depth2}
	\mzv{\vect{n}}
	=
	(-1)^{n_1} \sum_{\mathclap{\substack{2s+k = w\\k \geq 3}}}
	\mzv{2s}\mzv{k} \left[ \binom{k-1}{n_1-1} + \binom{k-1}{n_2-1} - \delta_{k,n_1}\right]
	-\frac{\mzv{w}}{2}
	.
\end{equation}
Note that we also get a (less interesting) relation for even weight, namely
\begin{equation}\label{eq:MZV-depth2-even}
	0 =(-1)^{n_1} \sum_{\mathclap{\substack{2s+k=w\\k \geq 2}}}
	\mzv{2s}\mzv{k} \left[ \binom{k-1}{n_1-1} + \binom{k-1}{n_2-1} - \delta_{k,n_1} \right]
	+\frac{\mzv{w}}{2}
	.
\end{equation}
More generally, for alternating sums $z_1,z_2 \rightarrow \pm 1$ with odd weight, we can exploit that $\ber{k}{z} = -\PLi_k(z) = -2\Li_{k}(z) \delta_{k,\text{even}}$ because of $1/z=z$ and \eqref{eq:polylog-inversion} to obtain, for all $\vect{z} \in \set{1,-1}^2$, that (with the convention $\Li_{0}(\pm 1)=-\frac{1}{2}$)
\begin{align}
	\Li_{\vect{n}}(\vect{z})
	&= (-1)^{n_1} \sum_{\mathclap{2s+k=w}}
	\Li_{2s}(z_1 z_2) \left[ 
		\binom{k-1}{n_1-1} \Li_k(z_1) 
		+ \binom{k-1}{n_2-1} \Li_k(z_2) 
	\right]
	\nonumber\\&\quad
	- \tfrac{1}{2} \Li_w (z_1 z_2)
	+ \Li_{n_1}(z_1) \Li_{n_2}(z_2) \delta_{n_2,\text{even}}
	.
	\label{eq:ALT-depth-2}%
\end{align}
This formula had been given in \cite[equation~(75)]{BorweinBradleyBroadhurst:Compendium}. Sometimes the notation $\overline{n_i}$ is used to indicate that $z_i=-1$, then a special case of \eqref{eq:ALT-depth-2} would read
\begin{equation*}\begin{split}
	\mzv{\overline{n_1},n_2}
	&=
	(-1)^{n_1} 
	\sum_{\mathclap{2s+k=w}}
		\mzv{\overline{2s}} \left[
			\binom{k-1}{n_1-1}\mzv{\overline{k}}
			+\binom{k-1}{n_2-1}\mzv{k}
		\right]
	\nonumber\\&\quad
	-\frac{\mzv{\overline{w}}}{2}
	+\mzv{\overline{n_1}}\mzv{n_2} \delta_{n_2,\text{even}}
\end{split}\end{equation*}
where $\mzv{\overline{0}} \defas -1/2$. 
An example of the parity theorem (in depth two) for fourth roots of unity had been worked out in \cite{Lalin:Colored}. That result reads ($n$ odd)
\begin{multline}\label{eq:PLi_n,1-i}
		\PLi_{n,1}(\imag,\imag) + \PLi_{n,1}(\imag,-\imag)
		\\
	=
	-2n\imag\Imaginaerteil \Li_{n+1}(\imag)
	-2\imag\sum_{s=1}^{\frac{n-1}{2}}
	\frac{(\imag\pi)^{2s}(4^s-1)}{(2s)!} B_{2s}
	\Imaginaerteil \Li_{n+1-2s}(\imag)
\end{multline}
and follows as a special case from \eqref{eq:PLi-depth-2}: For the left-hand side we first get
\begin{equation*}
	-2n \Li_{n+1}(\imag) 
	- \mzv{n+1}
	- \mzv{\overline{n+1}}
	- \sum_{s=0}^{\frac{n-1}{2}} \big[
		\Li_{n+1-2s}(\imag) \ber{2s}{1}
		+ \Li_{n+1-2s}(-\imag) \ber{2s}{-1}
	\big]
\end{equation*}
after exploiting $\ber{1}{\imag}+\ber{1}{-\imag} = \log(-\imag)+\log(\imag)=0$ and $\ber{2s+1}{\pm 1}=0$. This expression is purely imaginary since $\PLi_{n,1}(\imag,\pm\imag) = 2\imag\Imaginaerteil \Li_{n,1}(\imag,\pm\imag)$, thus we recover \eqref{eq:PLi_n,1-i} upon projecting on imaginary parts and using
\begin{equation}\label{eq:bernoulli-1/2}
	\frac{\mzv{\overline{2s}}}{\mzv{2s}}
	= \frac{\ber{2s}{-1}}{\ber{2s}{1}}
	= \frac{B_{2s}(1/2)}{B_{2s}}
	= 2^{1-2s}-1.
\end{equation}
The cancellation of the real parts implies a relation between Bernoulli polynomials. More generally, consider $\abs{z_1}=\abs{z_2}=1$ in \eqref{eq:PLi-depth-2} such that $\PLi_{\vect{n}}(\vect{z})$ is imaginary (real) for even (odd) weight $\abs{\vect{n}}$. Taking real (imaginary) parts, we conclude with \eqref{eq:polylog-inversion} and $\ber{k}{1/z} = (-1)^k \ber{k}{z}$ that
\begin{multline*}
	\sum_{\mu=1}^{\abs{\vect{n}}} \ber{\abs{\vect{n}}-\mu}{z_1 z_2} \left[ 
		\binom{\mu-1}{n_1-1} (-1)^{n_1} \ber{\mu}{\overline{z_1}}
		+\binom{\mu-1}{n_2-1} (-1)^{n_2} \ber{\mu}{\overline{z_2}}
	\right]
	\\
	=
	\ber{n_1}{z_1} \ber{n_2}{z_2} + \ber{\abs{\vect{n}}}{z_1 z_2}
	.
\end{multline*}
Via $z_1 = e^{2\pi\imag x}$ and $z_2 = e^{2\pi\imag y}$ this generalizes \eqref{eq:MZV-depth2-even} to the identity ($n_1,n_2\geq 0$)
\begin{multline}\label{eq:bernoulli-identity}
	\sum_{\mu=0}^{\abs{\vect{n}}}
	\binom{\abs{\vect{n}}}{\mu}
	B_{\abs{\vect{n}}-\mu}(x+y)
	\left[ 
		\binom{-n_1}{\mu-n_1}  B_{\mu}(x)
		+\binom{-n_2}{\mu-n_2} B_{\mu}(y)
		-\delta_{\mu,0}
	\right]
	\\
	=
	\binom{\abs{\vect{n}}}{n_1} B_{n_1}(x) B_{n_2}(y)
	.
\end{multline}

\section{Depth three}
\label{sec:depth-3}

Above we pointed out that it is not necessary to keep track of polynomials in $\log(-z_{1,N})$ during the integration process, because these will be recovered from the expansion $z_1\rightarrow 0$ using lemma~\ref{lem:reglim}.
So let us write $f\equiv g$ if $f-g$ is a polynomial in $\log(-z_{1,N})$ and $1/z_1$. According to \eqref{eq:diff-PLi-one},
\begin{equation*}
	\partial_{z_1} \PLi_{1,n_2,n_3} (\vect{z})
	\equiv \frac{\PLi_{n_2,n_3} (z_2, z_3) - \PLi_{n_2,n_3}(z_1 z_2, z_3)}{1-z_1}
	- \frac{\Li_{n_2,n_3}(z_1z_2, z_3)}{z_1}
\end{equation*}
which we can integrate using lemma~\ref{lemma:primitives} after inserting \eqref{eq:PLi-depth-2}. In the result
\begin{equation*}\begin{split}
	\PLi_{1,n_2,n_3}(\vect{z})
	&\equiv
	\Li_1(z_1) \PLi_{n_2,n_3}(z_2,z_3)
	- \Li_{1+n_2,n_3}(z_1 z_2, z_3)
	\\&\quad
	+ \Li_{n_2,1}(z_2,z_1) \ber{n_3}{z_3}
	+ \Li_{n_2+n_3,1}(z_2 z_3, z_1)
	\\&\quad
	-\sum_{\mathclap{\mu+\nu+s= n_2+n_3}}
		\ber{s}{z_1 z_2 z_3}
		\binom{-n_3}{\mu-n_3}
		\Li_{\mu}(1/z_3)
		\Li_{1+\nu}(z_1)
		(-1)^{\mu+\nu} 
	\\&\quad
	-\sum_{\mathclap{\mu+\nu+s=n_2+n_3}}
		\ber{s}{z_1 z_2 z_3}
		\binom{-n_2}{\mu-n_2} 
		\Li_{\mu,1+\nu}(z_2,z_1)
		(-1)^{\nu}
\end{split}\end{equation*}
we use summation indices $\mu,\nu,s\geq 0$ and the quasi-shuffle formula \cite{BorweinBradleyBroadhurstLisonek:SpecialValues,IharaKanekoZagier:DerivationDoubleShuffle}
\begin{equation}\label{eq:quasi-shuffle}
	\Li_{n_2,n_1}(z_2,z_1)
	=
	\Li_{n_1}(z_1) \Li_{n_2}(z_2) - \Li_{n_1,n_2}(z_1,z_2)-\Li_{n_1+n_2}(z_1 z_2) 
\end{equation}
for compactness. Note that even though the terms with non-increasing order among the arguments, like $\Li_{n_2,n_1}(z_2,z_1)$, do not appear as generators in \eqref{eq:def-MPLs}, they nevertheless belong to $\MPLs^2_{n_1+n_2}(z_1,z_2)$ via \eqref{eq:quasi-shuffle}. We use such substitution to present concise formulas.
The subsequent integrations over $\dd z_1/z_1$ according to \eqref{eq:diff-PLi} are immediately solved by \eqref{eq:iterated-primitive} with the result
\begin{align}
	\PLi_{\vect{n}}(\vect{z})
	&\equiv
	\Li_{n_1}(z_1) \PLi_{n_2,n_3}(z_2,z_3)
	- \Li_{n_1+n_2,n_3}(z_1 z_2, z_3)
	\label{eq:PLi-depth3-primitive}\\&\quad
	+ \Li_{n_2,n_1}(z_2,z_1) \ber{n_3}{z_3}
	+ \Li_{n_2+n_3,n_1}(z_2 z_3, z_1)
	\nonumber\\&\quad
	-\sum_{\mathclap{\mu+\nu+s= n_2}}
		\ber{s}{z_1 z_2 z_3}
		\binom{-n_3}{\mu}
		\binom{-n_1}{\nu}
		\Li_{n_3+\mu}(1/z_3)
		\Li_{n_1+\nu}(z_1)
		(-1)^{n_3+\mu}
	\nonumber\\&\quad
	-\sum_{\mathclap{\mu+\nu+s=n_3}}
		\ber{s}{z_1 z_2 z_3}
		\binom{-n_2}{\mu} 
		\binom{-n_1}{\nu} 
		\Li_{n_2+\mu,n_1+\nu}(z_2,z_1)
	.\nonumber
\end{align}
This primitive vanishes at $z_1 \rightarrow 0$, hence its deviation from $\PLi_{\vect{n}}(\vect{z})$ is exactly the expansion of lemma~\ref{lem:reglim}. So by adding
\begin{equation*}
	\PLi_{\vect{n}}(\vect{z})
	\sim
	-
	\sum_{\mathclap{\mu+\nu+s=n_1}} \ber{s}{z_1 z_2 z_3} \binom{-n_2}{\mu}\binom{-n_3}{\nu} \Li_{n_2+\mu,n_3+\nu}(1/\vect{z}') (-1)^{n_2+\mu+n_3+\nu}
\end{equation*}
to \eqref{eq:PLi-depth3-primitive}, we arrive at the final formula
\begin{align}
	\PLi_{\vect{n}}(\vect{z})
	&=
	\Li_{n_1}(z_1) \PLi_{n_2,n_3}(z_2,z_3)
	- \Li_{n_1+n_2,n_3}(z_1 z_2, z_3)
	\label{eq:PLi-depth-3}\\&\quad
	+ \Li_{n_2,n_1}(z_2,z_1) \ber{n_3}{z_3}
	+ \Li_{n_2+n_3,n_1}(z_2 z_3, z_1)
	\nonumber\\&\quad
	-\sum_{\mathclap{\mu+\nu+s= n_2}}
		\ber{s}{z_1 z_2 z_3}
		\binom{-n_3}{\mu}
		\binom{-n_1}{\nu}
		\Li_{n_3+\mu}(1/z_3)
		\Li_{n_1+\nu}(z_1)
		(-1)^{n_3+\mu}
	\nonumber\\&\quad
	-\sum_{\mathclap{\mu+\nu+s=n_3}}
		\ber{s}{z_1 z_2 z_3}
		\binom{-n_2}{\mu} 
		\binom{-n_1}{\nu} 
		\Li_{n_2+\mu,n_1+\nu}(z_2,z_1)
	\nonumber\\&\quad
	-
	\sum_{\mathclap{\mu+\nu+s=n_1}} \ber{s}{z_1 z_2 z_3} \binom{-n_2}{\mu}\binom{-n_3}{\nu} \Li_{n_2+\mu,n_3+\nu}(1/\vect{z}') (-1)^{n_2+\mu+n_3+\nu}
	.\nonumber
\end{align}
Note that we can rewrite $\Li_{\vect{n}}(1/\vect{z}) = (-1)^{\abs{\vect{n}}_0} \left[ \Li_{\vect{n}}(\vect{z}) - \PLi_{\vect{n}}(\vect{z}) \right]$ at any time, so keeping in mind \eqref{eq:quasi-shuffle}, equation \eqref{eq:PLi-depth-3} is the explicit witness of $\PLi_{\vect{n}}(\vect{z}) \in \MPLs^2_{\abs{\vect{n}}}(\vect{z})$.
Inserting \eqref{eq:PLi-depth-2} for $\PLi_{n_2,n_3}(z_2,z_3)$ and taking the limits $z_i \rightarrow 1$ in \eqref{eq:PLi-depth-3} yields the closed formula \eqref{eq:MZV-depth-3} which reduces MZV of even weight and depth three.%
\footnote{%
	The calculation shows that all potential divergences cancel each other as expected for $n_3>1$. The quasi-shuffle \eqref{eq:quasi-shuffle} was used for \eqref{eq:MZV-depth-3} to express the formula in terms of convergent MZV.
}

\begin{remark}
	In analogy to \eqref{eq:MZV-depth2-even} in depth two, we also get a relation from odd weight $\abs{\vect{n}}$ in depth three: $\PLi_{\vect{n}}(1,\ldots,1)=0$ implies an odd-weight depth-two relation given by the right-hand side of \eqref{eq:PLi-depth-3}, which automatically reduces to a depth one relation (theorem~\ref{thm:parity}). Further relations come from the necessary cancellation of imaginary parts (the coefficient of $\ber{1}{z_1 z_2 z_3}$ must cancel when $\vect{z} \rightarrow \vect{1}$), generalizing \eqref{eq:bernoulli-identity}.

	We have not studied all these relations in detail, but it might be interesting to do so in order to find out if these contraints contain any further information about MZV (or other special values of MPL) beyond the parity theorem.
\end{remark}

\bibliography{../bib/qft}

\end{document}